\newtheorem{thm}{Theorem}[section]
\newtheorem{prop}[thm]{Proposition}
\newtheorem{lem}[thm]{Lemma}
\newtheorem{cor}[thm]{Corollary}
\newtheorem{exa}[thm]{Example}
\newtheorem{rem}[thm]{Remark}
\newtheorem{Para}[thm]{}
\newcommand{\A}{A{\text-}\Mod}
\newcommand{\p}{A{\text-}\Proj}
\newcommand{\X}{\mathscr{X}}
\newcommand{\GP}{A{\text-}\GProj}
\newcommand{\T}{\mathscr{T}}
\newcommand{\C}{\mathbb{C}}
\newcommand{\K}{\mathbb{K}}
\newcommand{\D}{\mathbb{D}}
\def\id{\mathop{\rm id}\nolimits}
\def\Im{\mathop{\rm Im}\nolimits}
\def\Ker{\mathop{\rm Ker}\nolimits}
\def\Coker{\mathop{\rm Coker}\nolimits}
\def\Mod{\mathop{\rm Mod}\nolimits}
\def\Hom{\mathop{\rm Hom}\nolimits}
\def\Ext{\mathop{\rm Ext}\nolimits}
\def\fd{\mathop{\rm fd}\nolimits}
\def\pd{\mathop{\rm pd}\nolimits}
\def\fd{\mathop{\rm fd}\nolimits}
\def\Gpd{\mathop{\rm Gpd}\nolimits}
\def\Proj{\mathop{\rm Proj}\nolimits}
\def\GProj{\mathop{\rm GProj}\nolimits}
\def\Gpd{\mathop{\rm Gpd}\nolimits}
\def\Con{\mathop{\rm Con}\nolimits}
\def\Im{\mathop{\rm Im}\nolimits}
\begin{document}

\title{\large \bf Gorenstein projective modules and recollements over triangular matrix rings
\thanks{{\it 2010 Mathematics Subject Classification}: 18E30, 16E35, 18G20.}
\thanks{{\it Keywords}: Gorenstein projective modules; triangulated categories; triangle-equivalences; recollements.}
}
\author{Huanhuan Li$^a$, Yuefei Zheng$^b$, Jiangsheng Hu$^{c}$\footnote{Corresponding author} and Haiyan Zhu$^{d}$ \\
\it\footnotesize $^a$School of Mathematics and Statistics, Xidian University, Xi'an 710071, Shaanxi Province, China\\
\it\footnotesize $^b$College of Science, Northwest A$\&$F University, Yangling 712100, Shaanxi Province, China\\
\it\footnotesize $^c$Department of Mathematics, Jiangsu University of Technology, Changzhou 213001, Jiangsu Province, China\\
\it\footnotesize $^d$College of Science, Zhejiang University of Technology, Hangzhou 310023, Zhejiang Province, China\\
\it\footnotesize Email addresses: lihuanhuan0416@163.com, yuefeizheng@sina.com, jiangshenghu@jsut.edu.cn and hyzhu@zjut.edu.cn
}
\date{}
\baselineskip=14pt
\maketitle
\begin{abstract} Let $T=\left(
                          \begin{array}{cc}
                            R & M \\
                            0 & S \\
                          \end{array}
                        \right)
$ be a triangular matrix ring with $R$ and $S$ rings and $_RM_S$ an $R$-$S$-bimodule. We describe Gorenstein projective modules over $T$. In particular, we refine a result of
Enochs, Cort\'{e}s-Izurdiaga and Torrecillas [Gorenstein conditions over triangular
matrix rings, J. Pure Appl. Algebra {\bf 218} (2014), no. 8, 1544-1554]. Also, we consider when the recollement of $\D^b(T{\text-}\Mod)$ restricts to a recollement of its subcategory $\D^b(T{\text-}\Mod)_{fgp}$ consisting of complexes with finite Gorenstein projective dimension. As applications, we obtain recollements of the stable category $\underline{T{\text-}\GProj}$  and recollements of the Gorenstein defect category $\D_{def}(T{\text-}\Mod)$.
\end{abstract}

%

\section{\bf Introduction}

The history of Gorenstein homological algebra traces back to Auslander and Bridger \cite{AB}, where they study modules of G-dimension zero over noetherian rings. This kind of modules was generalized by Enochs and Jenda \cite{EJ1}, who introduced Gorenstein projective modules over arbitrary rings. The main idea of Gorenstein homological algebra is to get a counterpart of classical results in homological algebra by replacing projective modules with Gorenstein projective modules. In view of this, how to deal with the Gorenstein version corresponding to the classical version is very important.

Recollements of triangulated categories were introduced by Beilinson, Bernstein and Deligne \cite{BBD} and play an important role in algebraic geometry and representation theory.
Certain recollements of derived module categories of rings has been considered by many authors, it is of great use in dealing with algebraic properties of one ring from another two, see for instance \cite{AKL,AKLY,CX,CL,CPS1,CPS2,Ko}.
Let $T=\left(
                                                                                 \begin{array}{cc}
                                                                                   R & M \\
                                                                                   0 & S \\
                                                                                 \end{array}
                                                                               \right)$ be the triangular matrix ring with $R,S$ rings and $M$ an $R$-$S$-bimodule.  It is known that if $\pd M_S<\infty$ then we have the following recollement of bounded derived category  $\D^b(T{\text-}\Mod)$ of $T$-modules:
$$\xymatrix{\D^b( R{\text-}\Mod)\ar[r]^{\D^b(i_*)}&\D^b( T{\text-}\Mod)\ar@/^1pc/[l]^{\D^b(i^!)}\ar@/_1pc/[l]_{\mathbb{L}^b(i^*)}\ar[r]^{\D^b(j^*)} &\D^b( S{\text-}\Mod)\ar@/^1pc/[l]^{\D^b(j_*)}\ar@/_1pc/[l]_{\mathbb{L}^b(j_!)}, }\eqno{(1.1)}$$
where these six functors are the derived versions of those as defined in Lemma \ref{lem:5.1} (1).
Liu and Lu \cite{LL} show that if $\pd_RM<\infty$, then (1.1) restricts to the following recollement of homotopy category $\K^b( T{\text-}\Proj)$ of
$T$-projective modules (see Lemma \ref{lem:5.2}):
$$\xymatrix{\K^b( R{\text-}\Proj)\ar[r]^{\D^b(i_*)}&\K^b( T{\text-}\Proj)\ar@/^1pc/[l]^{\D^b(i^!)}\ar@/_1pc/[l]_{\mathbb{L}^b(i^*)}\ar[r]^{\D^b(j^*)} &\K^b( S{\text-}\Proj)\ar@/^1pc/[l]^{\D^b(j_*)}\ar@/_1pc/[l]_{\mathbb{L}^b(j_!)}}.\eqno{(1.2)}$$
Note that $\K^b( T{\text-}\Proj)$ is the smallest triangulated subcategory of $\D^b(T{\text-}\Mod)$ containing the class $T{\text-}\Proj$ of $T$-projective modules. Denote by $\langle T{\text-}\GProj\rangle$ the smallest triangulated subcategory of $\D^b(T{\text-}\Mod)$ containing the class $T{\text-}\GProj$ of $T$-Gorenstein projective modules. Naturally, we have the following question:

{\bf Question.} Whether and when the recollement (1.1) of $\D^b(T{\text-}\Mod)$ restricts the following recollement of $\langle T{\text-}\GProj\rangle$:
$$\xymatrix{\langle R{\text-}\GProj\rangle\ar[r]&\langle T{\text-}\GProj\rangle\ar@/^1pc/[l]\ar@/_1pc/[l]\ar[r] &\langle S{\text-}\GProj\rangle\ar@/^1pc/[l]\ar@/_1pc/[l]}?$$

In order to solve this question, we first give an explicit description for Gorenstein projective $T$-modules over the triangular matrix ring $T$.  We get the following:

\begin{thm}\label{thm:1.1} Let $T=\left(
                                                                                 \begin{array}{cc}
                                                                                   R & M \\
                                                                                   0 & S \\
                                                                                 \end{array}
                                                                               \right)
$ be a triangular matrix ring with $\pd_RM<\infty$ and $\fd M_S<\infty$.
Then
$\left(
                                                                                        \begin{array}{c}
                                                                                          X \\
                                                                                          Y \\
                                                                                        \end{array}
                                                                                      \right)_\phi$ is a Gorenstein projective left $T$-module if and only if $Y$  is a Gorenstein projective left $S$-module and $\phi:M\otimes_S Y\to X$ is an injective $R$-morphism with a Gorenstein projective cokernel.
\end{thm}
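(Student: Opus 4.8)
The plan is to prove both implications by passing through the functors of the recollement and transferring total acyclicity across the adjunctions $i^*\dashv i_*$ and $j_!\dashv j^*\dashv j_*$. I use three standard facts about the triangular matrix ring: every projective left $T$-module has the form $i_*(P)\oplus j_!(Q)$ with $P\in R{\text-}\Proj$ and $Q\in S{\text-}\Proj$, where $i_*(P)=\left(\begin{array}{c}P\\0\end{array}\right)$ and $j_!(Q)=\left(\begin{array}{c}M\otimes_S Q\\Q\end{array}\right)_{1}$; a sequence of $T$-modules is exact if and only if it is exact in each component, so $j^*\left(\begin{array}{c}X\\Y\end{array}\right)_\phi=Y$ and the top-component functor are exact while $i^*\left(\begin{array}{c}X\\Y\end{array}\right)_\phi=\Coker\phi$ is only right exact; and there is a natural right-exact sequence $j_!j^*\To\id\To i_*i^*\To 0$ which on $\left(\begin{array}{c}X\\Y\end{array}\right)_\phi$ reads
\[ j_!(Y)\To \left(\begin{array}{c}X\\Y\end{array}\right)_\phi\To i_*(\Coker\phi)\To 0 \]
and becomes short exact precisely when $\phi$ is injective, its $R$-component being $0\to M\otimes_S Y\xrightarrow{\phi}X\to\Coker\phi\to0$.

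For the implication $\Leftarrow$, assume $Y\in S{\text-}\GProj$, $\phi$ injective, and $Z:=\Coker\phi\in R{\text-}\GProj$. I would first show that $j_!$ and $i_*$ preserve Gorenstein projectivity. Given a complete projective resolution $B_\bullet$ of $Y$ over $S$, the complex $j_!(B_\bullet)$ has projective $T$-terms; its acyclicity reduces to that of $M\otimes_S B_\bullet$, and total acyclicity is checked against the two types of projective generators using $j^*i_*=0$ and $j^*j_!=\id$, which convert $\Hom_T(j_!B_\bullet,-)$ into $\Hom_S(B_\bullet,-)$. Symmetrically, from a complete projective resolution $A_\bullet$ of $Z$ over $R$, the complex $i_*(A_\bullet)$ is totally acyclic: $\Hom_T(i_*A_\bullet,i_*P)\cong\Hom_R(A_\bullet,P)$ by full faithfulness of $i_*$, while $\Hom_T(i_*A_\bullet,j_!Q)\cong\Hom_R(A_\bullet,M\otimes_S Q)$ is exact because $\pd_R(M\otimes_S Q)\le\pd_R M<\infty$ and a totally acyclic complex of projectives remains $\Hom_R(-,W)$-exact for every $W$ of finite projective dimension. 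Hence $j_!(Y),\,i_*(Z)\in T{\text-}\GProj$, and since the class of Gorenstein projective modules is closed under extensions, the displayed short exact sequence yields $\left(\begin{array}{c}X\\Y\end{array}\right)_\phi\in T{\text-}\GProj$.

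For the implication $\Rightarrow$, start with a complete projective resolution $\mathbf P$ of $G=\left(\begin{array}{c}X\\Y\end{array}\right)_\phi$. The counit sequence gives a degreewise split exact sequence of complexes $0\to j_!(B_\bullet)\to\mathbf P\to i_*(A_\bullet)\to 0$, where $B_\bullet=j^*\mathbf P$ and $A_\bullet=i^*\mathbf P$ consist of projective $S$- and $R$-modules, respectively. Using $j^*\dashv j_*$ together with the sequence $0\to i_*(M\otimes_S Q)\to j_!(Q)\to j_*(Q)\to 0$ and $\pd_R M<\infty$, which forces $\pd_T j_*(Q)<\infty$, I would show that $B_\bullet$ is totally acyclic, so $Y\in S{\text-}\GProj$. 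Then $M\otimes_S B_\bullet$ is acyclic, hence so is $j_!(B_\bullet)$, and the long exact homology sequence forces $i_*(A_\bullet)$, and therefore $A_\bullet$, to be acyclic. Passing to degree-zero cocycles in the degreewise split sequence (legitimate because all three complexes are acyclic) reproduces the displayed short exact sequence, identifying $\phi$ as injective and $\Coker\phi$ with the cocycle of $A_\bullet$. Finally $\Hom_T(\mathbf P,i_*(W))\cong\Hom_R(A_\bullet,W)$ via $i^*\dashv i_*$, tested against projective $W$, shows $A_\bullet$ is totally acyclic, so $\Coker\phi\in R{\text-}\GProj$.

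The main obstacle is the vanishing $\Tor^S_{\ge 1}(M,Z_n)=0$ for the cocycles $Z_n$ of a totally acyclic complex of projective $S$-modules, which underlies the acyclicity of $M\otimes_S B_\bullet$ used in both directions. I expect to handle it by dimension shifting that exploits the two-sided infiniteness of the complex, namely $\Tor^S_{j}(M,Z_n)\cong\Tor^S_{j+k}(M,Z_{n-k})$, and then choosing $k$ with $j+k>\fd M_S$ so that the right-hand side vanishes. The analogous $\Ext^{\ge1}_R(Z_n,M\otimes_S Q)=0$ needed in the $\Leftarrow$ direction is disposed of the same way using $\pd_R M<\infty$. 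These two finiteness hypotheses enter only at these symmetric points; everything else is formal manipulation with the recollement functors and the closure of $T{\text-}\GProj$ under extensions.
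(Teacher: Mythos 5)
Your proposal is correct and takes essentially the same route as the paper: your preservation claims for $i_*$ and $j_!$ are exactly the paper's Lemma \ref{lem:3.3}, your finite-projective-dimension test objects ($\pd_T j_*(Q)<\infty$, $\pd_R(M\otimes_S Q)<\infty$, and acyclicity of $M\otimes_S(-)$ under $\fd M_S<\infty$) are its Lemmas \ref{lem:2.1} and \ref{lem:2.3}, the ``if'' direction is the same extension-closure argument applied to $0\to j_!(Y)\to \left(\begin{smallmatrix}X\\ Y\end{smallmatrix}\right)_{\phi}\to i_*(\Coker\phi)\to 0$, and the ``only if'' direction likewise extracts the bottom complex $j^*\mathbf{P}$ and the cokernel complex $i^*\mathbf{P}$ from a complete resolution and tests them against the same objects. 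The only real difference is presentational: you phrase everything in recollement-functor language (with projectives decomposed as $i_*(P)\oplus j_!(Q)$), whereas the paper writes out the same complexes in explicit matrix form.
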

We note that the above theorem generalizes Theorem 3.5 in \cite{ECT} to a more general case, whereas the authors therein assume the ring  $R$ to be left Gorenstein regular.

Meanwhile, for an arbitrary ring $A$, we would like to know the exact form of objects in $\langle A{\text-}\GProj\rangle$. Inspired by \cite{Vel}, an object $X^\bullet\in\D^b(\A)$ is said to have {\it finite Gorenstein projective dimension} if $X^\bullet$ is isomorphic to some bounded complex consisting of Gorenstein projective modules. Denote by $\D^b(\A)_{fgp}$ the subcategory of $\D^b(\A)$ consisting of complexes with finite Gorenstein projective dimension. It is shown in the appendix that $\D^b(\A)_{fgp}$ is a thick subcategory of $\D^b(\A)$ with $\D^b(\A)_{fgp}=\langle A{\text-}\GProj\rangle$ (see Theorem \ref{thm:A.3}). Besides, we get two equivalences of triangulated categories:
$\underline{\GP}\simeq\D^b(\A)_{fgp}/\K^b(\p)$ and $\D_{def}(\A)\simeq\D^b(\A)/\D^b(\A)_{fgp}$ (see Theorem \ref{thm:A.5}).

According to the previous results, we get an answer to the above question.
\begin{thm}\label{thm:1.3} Let $T=\left(
                                                                                 \begin{array}{cc}
                                                                                   R & M \\
                                                                                   0 & S \\
                                                                                 \end{array}
                                                                               \right)
$ be a triangular matrix ring with $\pd_RM<\infty$ and $\pd M_S<\infty$. Denote by $S{\text-}\GProj$ the class of Gorenstein projective left $S$-modules. Then
\begin{enumerate}
\item[(1)] (1.1) restricts to the following left recollement:
$$\xymatrix{\D^b( R{\text-}\Mod)_{fgp}\ar[r]^{\D^b(i_*)}&\D^b( T{\text-}\Mod)_{fgp}\ar@/_1pc/[l]_{\mathbb{L}^b(i^*)}\ar[r]^{\D^b(j^*)} &\D^b( S{\text-}\Mod)_{fgp}\ar@/_1pc/[l]_{\mathbb{L}^b(j_!)}}.\eqno{(1.3)}$$
\item[(2)] Assume further $\Gpd {_R}M\otimes_S G<\infty$ for any $G\in S{\text-}\GProj$. Then (1.1) restricts the following recollement:
$$\xymatrix{\D^b( R{\text-}\Mod)_{fgp}\ar[r]^{\D^b(i_*)}&\D^b( T{\text-}\Mod)_{fgp}\ar@/^1pc/[l]^{\D^b(i^!)}\ar@/_1pc/[l]_{\mathbb{L}^b(i^*)}\ar[r]^{\D^b(j^*)} &\D^b( S{\text-}\Mod)_{fgp}\ar@/^1pc/[l]^{\D^b(j_*)}\ar@/_1pc/[l]_{\mathbb{L}^b(j_!)}}.\eqno{(1.4)}$$
\end{enumerate}
\end{thm}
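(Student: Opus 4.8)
The plan is to reduce both statements to a question about Gorenstein projective modules. By Theorem \ref{thm:A.3} each of $\D^b(R{\text-}\Mod)_{fgp}$, $\D^b(T{\text-}\Mod)_{fgp}$, $\D^b(S{\text-}\Mod)_{fgp}$ equals the thick subcategory $\langle A{\text-}\GProj\rangle$ generated by the Gorenstein projective modules. A triangulated functor $F$ between bounded derived categories carries one such subcategory into another as soon as it carries the generating Gorenstein projective modules there, since $F^{-1}(\langle B{\text-}\GProj\rangle)$ is a thick subcategory and hence contains $\langle A{\text-}\GProj\rangle$ once it contains $A{\text-}\GProj$. Granting that the relevant functors of $(1.1)$ restrict in this way, the recollement will restrict automatically: adjunctions and fully faithfulness are inherited by full subcategories, $\D^b(j^*)\D^b(i_*)=0$ is automatic, and for an object of $\D^b(T{\text-}\Mod)_{fgp}$ the gluing triangles have all three vertices inside $\D^b(T{\text-}\Mod)_{fgp}$ (each vertex is the image, under a restricted functor, of an object of a smaller $fgp$ category), so they remain distinguished triangles of the thick subcategory. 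Thus the task is to check the functors on Gorenstein projective modules: for $(1.3)$ the four lower functors $\D^b(i_*),\mathbb{L}^b(i^*),\D^b(j^*),\mathbb{L}^b(j_!)$, and for $(1.4)$ also $\D^b(i^!)$ and $\D^b(j_*)$.

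First I would record the computational device to be used repeatedly. Writing the module-level functors as in Lemma \ref{lem:5.1}, one has $\D^b(i_*)(X)=\left(\begin{smallmatrix}X\\0\end{smallmatrix}\right)_0$, $\D^b(j^*)\left(\begin{smallmatrix}X\\Y\end{smallmatrix}\right)_\phi=Y$, $\D^b(i^!)\left(\begin{smallmatrix}X\\Y\end{smallmatrix}\right)_\phi=X$, $\D^b(j_*)(Y)=\left(\begin{smallmatrix}0\\Y\end{smallmatrix}\right)_0$, while $\mathbb{L}^b(i^*)\left(\begin{smallmatrix}X\\Y\end{smallmatrix}\right)_\phi$ is the mapping cone of $M\otimes^{\mathbb{L}}_S Y\xrightarrow{\phi}X$ and $\mathbb{L}^b(j_!)(Y)=\left(\begin{smallmatrix}M\otimes^{\mathbb{L}}_S Y\\Y\end{smallmatrix}\right)$. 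The key point I would establish is that for $Y\in S{\text-}\GProj$ and $\fd M_S<\infty$ one has $\Tor^S_i(M,Y)=0$ for all $i\geq1$, so that $M\otimes^{\mathbb{L}}_S Y\simeq M\otimes_S Y$ sits in degree $0$. I would prove this by dimension shifting: embedding $Y$ into a projective with Gorenstein projective cokernel $W$ (a syzygy in a complete resolution of $Y$) gives $\Tor^S_i(M,Y)\cong\Tor^S_{i+1}(M,W)$ for $i\geq1$, and iterating $n$ times expresses $\Tor^S_i(M,Y)$ as $\Tor^S_{i+n}(M,W_n)$ with $W_n$ Gorenstein projective, which vanishes once $i+n>\fd M_S$.

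With this in hand the four lower functors are straightforward, and here $\pd M_S<\infty$ enters only through $\fd M_S<\infty$. For $\D^b(i_*)$ and $\D^b(j^*)$, Theorem \ref{thm:1.1} shows directly that $\left(\begin{smallmatrix}X\\0\end{smallmatrix}\right)_0$ is $T$-Gorenstein projective when $X$ is $R$-Gorenstein projective, and that $Y$ is $S$-Gorenstein projective whenever $\left(\begin{smallmatrix}X\\Y\end{smallmatrix}\right)_\phi$ is $T$-Gorenstein projective. For $\mathbb{L}^b(i^*)$, if $\left(\begin{smallmatrix}X\\Y\end{smallmatrix}\right)_\phi$ is $T$-Gorenstein projective then by Theorem \ref{thm:1.1} the module $Y$ is $S$-Gorenstein projective, $\phi$ is injective, and $C=\Coker\phi$ is $R$-Gorenstein projective; since $\Tor^S_{\geq1}(M,Y)=0$, the cone of $M\otimes_S Y\xrightarrow{\phi}X$ is quasi-isomorphic to $C$, so $\mathbb{L}^b(i^*)\left(\begin{smallmatrix}X\\Y\end{smallmatrix}\right)_\phi\simeq C\in R{\text-}\GProj$. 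For $\mathbb{L}^b(j_!)$, Tor-vanishing gives $\mathbb{L}^b(j_!)(Y)\simeq\left(\begin{smallmatrix}M\otimes_S Y\\Y\end{smallmatrix}\right)_{\id}$, which is $T$-Gorenstein projective by Theorem \ref{thm:1.1} (its structure map is the identity, with zero cokernel). This establishes $(1.3)$.

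For $(1.4)$ the remaining two functors are where the extra hypothesis enters, and I expect this to be the main obstacle, since $\D^b(i^!)$ and $\D^b(j_*)$ do not preserve Gorenstein projectivity and only land in the larger class of objects of finite Gorenstein projective dimension. Let $\left(\begin{smallmatrix}X\\Y\end{smallmatrix}\right)_\phi$ be $T$-Gorenstein projective, giving a short exact sequence $0\to M\otimes_S Y\to X\to C\to 0$ of $R$-modules with $C$ Gorenstein projective. The assumption $\Gpd({}_RM\otimes_S Y)<\infty$, together with the two-out-of-three behaviour of Gorenstein projective dimension on short exact sequences, forces $\Gpd_R X<\infty$, so $\D^b(i^!)\left(\begin{smallmatrix}X\\Y\end{smallmatrix}\right)_\phi=X$ lies in $\D^b(R{\text-}\Mod)_{fgp}$. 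For $\D^b(j_*)$, the point is that $\left(\begin{smallmatrix}0\\Y\end{smallmatrix}\right)_0$ need not be Gorenstein projective; instead I would use the short exact sequence of $T$-modules $0\to\left(\begin{smallmatrix}M\otimes_S Y\\0\end{smallmatrix}\right)_0\to\left(\begin{smallmatrix}M\otimes_S Y\\Y\end{smallmatrix}\right)_{\id}\to\left(\begin{smallmatrix}0\\Y\end{smallmatrix}\right)_0\to0$, that is, the triangle $\D^b(i_*)(M\otimes_S Y)\to\mathbb{L}^b(j_!)(Y)\to\D^b(j_*)(Y)\to$. Here $\mathbb{L}^b(j_!)(Y)$ is $T$-Gorenstein projective, and since $\D^b(i_*)$ is exact and sends $R$-Gorenstein projectives to $T$-Gorenstein projectives, it sends a finite Gorenstein projective resolution of $M\otimes_S Y$ to one over $T$, whence $\D^b(i_*)(M\otimes_S Y)\in\D^b(T{\text-}\Mod)_{fgp}$. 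Thickness of $\D^b(T{\text-}\Mod)_{fgp}$ then places the third vertex $\D^b(j_*)(Y)$ inside it as well. This shows all six functors restrict, yielding the full recollement $(1.4)$.
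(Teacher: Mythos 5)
Your proof is correct, and it takes a genuinely different route from the paper's. The paper never isolates a reduction-to-modules principle: it verifies each inclusion at the level of complexes, and the technical heart of its part (1) is the treatment of $\mathbb{L}^b(i^*)$, where a quasi-isomorphism $P^\bullet\to X^\bullet$ with $P^\bullet\in\K^{-,b}(T{\text-}\Proj)$ whose low-degree syzygies are Gorenstein projective (Lemma \ref{lem:A.2}) is truncated at such a syzygy, spliced with a complete resolution, and tensored down using $\pd R_T<\infty$ (Lemmas \ref{lem:2.5} and \ref{lem:2.1}) to produce an explicit bounded Gorenstein projective representative; in part (2) the paper handles $\D^b(j_*)$ through Propositions \ref{prop:5.8} and \ref{prop:A.4}, and handles $\D^b(i^!)$ by the gluing triangle together with the already-proved statement for $\mathbb{L}^b(i^*)$. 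You replace all of this by the thick-generation principle --- $\D^b(\A)_{fgp}=\langle A{\text-}\GProj\rangle$ (Theorem \ref{thm:A.3}) plus the fact that the preimage of a thick subcategory under a triangle functor is thick --- so that every verification happens on stalk complexes of Gorenstein projective modules, where Theorem \ref{thm:1.1} and your Tor-vanishing observation (the module-level form of Lemma \ref{lem:2.1}(2), applied to a complete resolution) finish the job; your module-level treatment of $\D^b(i^!)$ via the sequence $0\to M\otimes_SY\to X\to C\to0$ is moreover independent of the $j_*$ step, unlike the paper's. Your route is shorter and more conceptual; what the paper's computation buys is an explicit representative and the equality $\mathbb{L}^b(i^*)(\D^b(T{\text-}\Mod)_{fgp})=\D^b(R{\text-}\Mod)_{fgp}$ rather than a mere inclusion (only the inclusion is needed for the restriction). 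Two points in your write-up should be made explicit to be fully rigorous: the formulas for $\mathbb{L}^b(i^*)$ (as $\Con(M\otimes_S^{\mathbb{L}}Y\to X)$) and for $\mathbb{L}^b(j_!)$ are asserted rather than derived --- they follow from the exact sequence of right $T$-modules $0\to\left(\begin{smallmatrix}0&M\end{smallmatrix}\right)\to e_RT\to R_T\to0$ with $\left(\begin{smallmatrix}0&M\end{smallmatrix}\right)\cong M\otimes_Se_ST$, and from applying $Te_S\otimes_S-$ to a projective resolution, respectively --- and the ``two-out-of-three'' property of finite $\Gpd$ in short exact sequences should either be cited or, more in the spirit of your own argument, replaced by thickness of $\D^b(R{\text-}\Mod)_{fgp}$ applied to the triangle coming from that short exact sequence, which is all you actually need.
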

We note that the condition ``$\Gpd {_R}M\otimes_S G<\infty$ for any $G\in S{\text-}\GProj$'' in Theorem \ref{thm:1.3} (2) arises naturally in representation theory (see Example \ref{exa:5.5}). As applications of Theorem \ref{thm:1.3}, we obtain (left) recollement of stable category $\underline{T{\text-}\GProj}$ of Gorenstein projective left $T$-modules and (left) recollement of Gorenstein defect category $\D_{def}(T{\text-}\Mod)$ of $T$-modules (see Corollaries \ref{cor:5.3} and \ref{cor:5.4}). These are generalizations of corresponding results of \cite{ZP} and \cite{L}.

The contents of this paper are outlined as follows. In Section \ref{pre}, we fix notations and collect some basic facts needed in our later proofs.
In Section \ref{Gor}, we study Gorenstein projective modules over triangular matrix rings and prove Theorem \ref{thm:1.1}. In Section \ref{singcat}, we consider when the recollement of $\D^b(T{\text-}\Mod)$ restricts to a recollement of its subcategory $\D^b(T{\text-}\Mod)_{fgp}$ consisting of complexes with finite Gorenstein projective dimension, including the proof of Theorem \ref{thm:1.3}.

\section{\bf Preliminaries}\label{pre}

\vspace{0.2cm}
Throughout this paper, all rings will be associative with identity and all subcategories will be full additive and closed under isomorphisms. For any ring $A$, the category of left $A$-modules is denoted by $A{\text-}\Mod$. Unless otherwise specified we will be working with left modules. For any $M\in A{\text-}\Mod$, we use $1_M$ (or $1$ for short) to denote the identity map of $M$.
We use $A{\text-}\Proj$  to denote the subcategories of left projective $A$-modules. Usually, we use $_AM$ (resp. $M_A$) to denote a left (resp. right) $A$-module $M$, and the  projective, injective and flat dimensions of $_AM$ (resp. $M_A$) will be denoted by $\pd_AM$, $\id_AM$ and $\fd_AM$ (resp. $\pd M_A$, $\id M_A$ and $\fd M_A$) respectively. For a subclass $\X$ of left $A$-modules. Denote by $\X^\bot$ (resp. $^\bot\X$) the subcategory consisting of modules $M\in A{\text-}\Mod$ such that $\Ext_A^1(X,M)=0$ (resp. $\Ext_A^1(M,X)=0$) for any $X\in\X$.

For an additive category $\X$, denote by $\C(\X)$ the category of $\X$-complexes; the objects are complexes of objects in $\X$ and morphisms are chain maps. As usual, by adding a superscript  $*\in\{+,\ -, \ b\}$, we denote their corresponding $*$-bounded categories. For example, $\C^-(\X)$ is the category of bounded above complexes. Usually, an object of $\C^*(\X)$ is written as $X^\bullet=(X^i,d_X^i)_{i\in\mathbb{Z}}$, where $d_X^i:X^i\to X^{i+1}$ is the $i$th differential of $X^\bullet$. For the sake of simplicity, we also write $X^\bullet=(X^i)_{i\in\mathbb{Z}}$ for short. If $\X$ is an abelian category, for any $X^\bullet\in\C^*(\X)$ and any integer $n$. We set $Z^n(X^\bullet)=\Ker d_X^n$ and $B^n(X^\bullet)=\Im d_X^{n-1}$. Denote by $H^n(X^\bullet)=Z^n(X^\bullet)/B^n(X^\bullet)$ and by $H(X^\bullet)$ the homology complex of $X^\bullet$. $X^{\bullet}$ is called  acyclic (or exact) if $H^{n}(X^{\bullet})=0$ for any $n\in \mathbb{Z}$. Given a chain map $f:X^\bullet\to Y^\bullet$, we use $\Con(f)$ to denote the mapping cone of $f$. A chain map $f:X^\bullet\to Y^\bullet$ is called a quasi-isomorphism if $H(f):H(X^\bullet)\to H(Y^\bullet)$ is an isomorphism, or equivalently, $\Con(f)$ is acyclic.

Let $A$ be a ring and $X^\bullet\in\C(A{\text-}\Mod)$. Recall that $X^\bullet$ is called complete projective if it is acyclic with each $X^i$ projective and the complex $\Hom_{A}(X^\bullet,P)$ remains acyclic for any projective $A$-module $P$. A module $M$ is called  Gorenstein projective if $M\cong Z^0(X^\bullet)$ for some complete projective complex $X^\bullet$.
The class of all Gorenstein projective $A$-modules is denoted by $A{\text-}\GProj$.
The Gorenstein projective dimension $\Gpd_AM$ of the module $M$ is defined by declaring that $\Gpd_AM\leq n$ if, and only if there is an exact sequence $0\to G_n\to \cdots\to G_1\to G_0\to M\to0$ with all $G_i$ Gorenstein projective for some nonnegative integer $n$ (see \cite[Definition2.8]{Ho}).
We refer to \cite{AB,AM,CFH,EJ1,EJ2,Ho} for more details.
%

\begin{lem}\label{lem:2.1}  The following hold true for a ring $A$.

(1) Let $X^\bullet$ be an acyclic complex with each $X^i\in A{\text-}\Proj$ and $M\in A{\text-}\Mod$. Then $\Hom_A(X^\bullet,M)$ is acyclic if $\id_AM<\infty$. Furthermore, assume $X^\bullet$ is complete projective, then $\Hom_A(X^\bullet,M)$ is acyclic if $\pd_AM<\infty$.

(2) Let $X^\bullet$ be an acyclic complex with each $X^i\in A{\text-}\Proj$ and $M\in\Mod A$. Then $M\otimes_A X^\bullet$ is acyclic if $\fd M_A<\infty$.
\end{lem}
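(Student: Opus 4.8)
The plan is to prove all three assertions by the same mechanism, namely induction on the relevant homological dimension of $M$, exploiting the fact that each $X^i$ is projective. The key structural observation is that for any short exact sequence $0\to M'\to M''\to M'''\to 0$ of (left or right, as appropriate) $A$-modules, applying $\Hom_A(X^i,-)$ or $(-)\otimes_A X^i$ degreewise again yields a short exact sequence, since $X^i$ is projective and hence both $\Hom_A(X^i,-)$ and $(-)\otimes_A X^i$ are exact. Thus a short exact sequence of modules produces a short exact sequence of the associated complexes, and the resulting long exact sequence in cohomology (resp. homology) lets me propagate acyclicity from two of the three complexes to the third.

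For the base cases I would argue as follows. In the first assertion of (1), when $\id_A M=0$ the module $M$ is injective, so $\Hom_A(-,M)$ is an exact functor and therefore carries the acyclic complex $X^\bullet$ to an acyclic complex; note that here I only use acyclicity of $X^\bullet$, not any completeness. In the ``furthermore'' clause, when $\pd_A M=0$ the module $M$ is projective, and here acyclicity of $\Hom_A(X^\bullet,M)$ is exactly what the hypothesis that $X^\bullet$ is complete projective delivers. For (2), when $\fd M_A=0$ the module $M$ is flat, so $M\otimes_A(-)$ is exact and preserves acyclicity of $X^\bullet$.

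For the inductive steps I would use dimension shifting. Say $\id_A M=n>0$; choose a short exact sequence $0\to M\to I\to N\to 0$ with $I$ injective and $\id_A N=n-1$, apply $\Hom_A(X^\bullet,-)$ to obtain a short exact sequence of complexes $0\to \Hom_A(X^\bullet,M)\to\Hom_A(X^\bullet,I)\to\Hom_A(X^\bullet,N)\to 0$, and conclude from the associated long exact sequence together with the base case (for $I$) and the induction hypothesis (for $N$) that $\Hom_A(X^\bullet,M)$ is acyclic. The ``furthermore'' clause and statement (2) are identical in form, using instead a short exact sequence $0\to K\to P\to M\to 0$ with $P$ projective and $\pd_A K=n-1$ (resp. $0\to K\to F\to M\to 0$ with $F$ flat and $\fd K_A=n-1$), and applying $\Hom_A(X^\bullet,-)$ (resp. $(-)\otimes_A X^\bullet$).

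Since the argument is a standard dimension-shift induction, there is no serious obstacle; the only point demanding care is recognizing which hypothesis drives each base case. In particular, for finite injective dimension mere acyclicity of the complex of projectives suffices, whereas for finite projective dimension the base case forces one to invoke the stronger ``complete projective'' hypothesis, since an acyclic complex of projectives need not have $\Hom_A(X^\bullet,P)$ acyclic for projective $P$ without it. This explains why the completeness assumption appears only in the second part of (1) and not in the first.
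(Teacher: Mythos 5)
Your proof is correct and takes essentially the same route as the paper's: both arguments reduce to a finite injective (resp.\ projective, flat) resolution of $M$, use projectivity of each $X^i$ to turn exact sequences of modules into exact sequences of complexes, and observe that mere acyclicity of $X^\bullet$ suffices for the injective and flat base cases while the complete projective hypothesis is needed exactly for the projective base case. The only difference is organizational: the paper applies $\Hom_A(X^\bullet,-)$ to the whole finite resolution at once and then concludes, whereas you make explicit the step-by-step dimension-shifting induction and long exact sequences on which that conclusion implicitly rests.
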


\begin{proof} We only prove (1) and the proof of (2) is similar.
Let $X^\bullet$ be an acyclic complex with each $X^i\in A{\text-}\Proj$ and $\id_AM<\infty$. Then there exists an exact sequence $0\to M\to E^0\to E^1\to \cdots\to E^n\to0$ with each $E^i$ injective for some integer $n\geq0$. Since each $X^i\in A{\text-}\Proj$, we get an exact sequence of complexes of abelian groups $0\to\Hom_A(X^\bullet,M)\to\Hom_A(X^\bullet,E^0)\to\Hom_A(X^\bullet,E^1)\to\cdots\to\Hom_A(X^\bullet,E^n)\to0$.
Note that each $\Hom_A(X^\bullet,E^i)$ is acyclic. Then $\Hom_A(X^\bullet,M)$ is acyclic. Now assume further that $X^\bullet$ is complete projective. Note that $\Hom_A(X^\bullet,P)$ is acyclic for any projective $A$-module $P$. So we deduce $\Hom_A(X^\bullet,M)$ is acyclic if $\pd_AM<\infty$ in a similar way by choosing a projective resolution of $M$.
\end{proof}

Let $R$ and $S$ be two rings, ${_R}M_S$ an $R$-$S$-bimodule, and $T=\left(
                                                                                 \begin{array}{cc}
                                                                                   R & M \\
                                                                                   0 & S \\
                                                                                 \end{array}
                                                                               \right)
$ the triangular matrix ring. A left $T$-module is identified with a triple $\left(
                                                                                        \begin{array}{c}
                                                                                          X \\
                                                                                          Y \\
                                                                                        \end{array}
                                                                                      \right)_\phi$,
 where $X\in R{\text-}\Mod$, $Y\in S{\text-}\Mod$ and $\phi:M\otimes_S Y\to X$ ia an $R$-morphism. If there is no possible confusion, we shall omit the morphism $\phi$ and write $\left(
                                                                                        \begin{array}{c}
                                                                                          X \\
                                                                                          Y \\
                                                                                        \end{array}
                                                                                      \right)$ for short. For example, we write $\left(
                                                                                        \begin{array}{c}
                                                                                          M\otimes_SY \\
                                                                                          Y \\
                                                                                        \end{array}
                                                                                      \right)$ for the $T$-module $\left(
                                                                                        \begin{array}{c}
                                                                                          M\otimes_SY \\
                                                                                          Y \\
                                                                                        \end{array}
                                                                                      \right)_1$.
A $T$-morphism
$\left(
\begin{array}{c}
 X \\
  Y \\
\end{array}
\right)_\phi\to \left(
\begin{array}{c}
X' \\
Y' \\
\end{array}
\right)_{\phi'}$
will be identified with a pair
$\left(
\begin{array}{c}
f \\
g \\
\end{array}
\right)$,
where $f\in\Hom_R(X,X')$ and $g\in\Hom_S(Y,Y')$, such that the following diagram commutes:
$$\xymatrix{M\otimes_S Y\ar[r]^\phi\ar[d]_{1\otimes g} & X\ar[d]_f\\
M\otimes_S Y'\ar[r]^{\phi'} &X'.
}$$

A sequence $0\to\left(
\begin{array}{c}
 X_1 \\
 Y_1 \\
\end{array}
\right)_{\phi_1}\xrightarrow{\left(
\begin{array}{c}
f_1 \\
g_1 \\
\end{array}
\right)}\left(
\begin{array}{c}
 X_2 \\
 Y_2 \\
\end{array}
\right)_{\phi_2}\xrightarrow{\left(
\begin{array}{c}
f_2 \\
g_2 \\
\end{array}
\right)}\left(
\begin{array}{c}
 X_3 \\
 Y_3 \\
\end{array}
\right)_{\phi_3}\to0$
in $ T{\text-}\Mod$ is exact if and only if $0\to X_1\xrightarrow{f_1}X_2\xrightarrow{f_2}X_3\to0$ and $0\to Y_1\xrightarrow{g_1}Y_2\xrightarrow{g_2}Y_3\to0$ are exact in $R{\text-}\Mod$ and $S{\text-}\Mod $ respectively.

Dually, a right $T$-module is identified with a triple $\left(
                                                               \begin{array}{cc}
                                                                 X & Y \\
                                                               \end{array}
                                                             \right)_\psi$,
where $X\in\Mod{\text-}R$, $Y\in\Mod{\text-}S$ and $\psi:X\otimes_R M\to Y$ ia a $S$-morphism.
We refer the reader to \cite{ARS,G} for more details.

\begin{lem}\label{lem:2.2}\cite[Theorem 3.1]{HV2} Let $T=\left(
                                                                                 \begin{array}{cc}
                                                                                   R & M \\
                                                                                   0 & S \\
                                                                                 \end{array}
                                                                               \right)
$ be a triangular matrix ring. Then
$\left(
                                                                                        \begin{array}{cc}
                                                                                          X \\
                                                                                          Y \\
                                                                                        \end{array}
                                                                                      \right)_\phi$ is a projective $T$-module if and only if $Y$ is a projective $S$-module and $\phi:M\otimes_S Y\to X$ is an injective $R$-morphism with a projective cokernel.
\end{lem}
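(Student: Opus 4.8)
The plan is to reduce the statement to the two ``column'' projectives arising from the decomposition $_TT \cong \left(\begin{array}{c} R \\ 0 \end{array}\right) \oplus \left(\begin{array}{c} M \\ S \end{array}\right)$ of the regular module (the summands being $Te_1$ and $Te_2$ for the diagonal idempotents $e_1,e_2$). First I would record two adjunctions. By the description of exact sequences of $T$-modules above, the projection functors $U_R, U_S : T{\text-}\Mod \to R{\text-}\Mod, S{\text-}\Mod$ sending $\left(\begin{array}{c} X \\ Y \end{array}\right)_\phi$ to $X$ and to $Y$ are both exact. A one-line $\Hom$-computation shows that $X \mapsto \left(\begin{array}{c} X \\ 0 \end{array}\right)$ is left adjoint to $U_R$ and that $Y \mapsto \left(\begin{array}{c} M \otimes_S Y \\ Y \end{array}\right)_1$ is left adjoint to $U_S$: a $T$-morphism out of $\left(\begin{array}{c} X \\ 0 \end{array}\right)$ is just an $R$-map on $X$, while a $T$-morphism out of $\left(\begin{array}{c} M\otimes_S Y \\ Y \end{array}\right)_1$ is just an $S$-map on $Y$ (its $R$-component being then forced). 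Since a left adjoint of an exact functor preserves projectives, $\left(\begin{array}{c} P \\ 0 \end{array}\right)$ and $\left(\begin{array}{c} M \otimes_S Q \\ Q \end{array}\right)_1$ are projective $T$-modules for all $P \in R{\text-}\Proj$ and $Q \in S{\text-}\Proj$.

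For sufficiency, assume $Y \in S{\text-}\Proj$ and that $\phi$ is injective with $C := \Coker \phi \in R{\text-}\Proj$. Then the exact sequence $0 \to M\otimes_S Y \xrightarrow{\phi} X \to C \to 0$ of $R$-modules splits, and a splitting $\sigma : C \to X$ produces an $R$-isomorphism $(\phi,\sigma) : (M\otimes_S Y)\oplus C \xrightarrow{\ \sim\ } X$. I would then check that the pair $\big((\phi,\sigma),\, 1_Y\big)$ is a $T$-isomorphism $\left(\begin{array}{c} M\otimes_S Y \\ Y \end{array}\right)_1 \oplus \left(\begin{array}{c} C \\ 0 \end{array}\right) \xrightarrow{\ \sim\ } \left(\begin{array}{c} X \\ Y \end{array}\right)_\phi$; the only point to verify is commutativity of the structure-map square, which collapses to the identity that $(\phi,\sigma)$ composed with the inclusion of the first summand equals $\phi$. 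By the first paragraph the source is projective, hence so is $\left(\begin{array}{c} X \\ Y \end{array}\right)_\phi$.

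For necessity, I would use that every projective $T$-module is a direct summand of a free one $T^{(K)}$, and that the decomposition of $_TT$ gives $T^{(K)} \cong \left(\begin{array}{c} R^{(K)} \oplus M^{(K)} \\ S^{(K)} \end{array}\right)_\iota$, where $\iota$ is the split inclusion $M \otimes_S S^{(K)} \cong M^{(K)} \hookrightarrow R^{(K)} \oplus M^{(K)}$. This free module already satisfies the right-hand conditions ($S^{(K)}$ is free, and $\iota$ is injective with free cokernel $R^{(K)}$), so it remains to show these conditions pass to direct summands. Writing $T^{(K)} \cong \left(\begin{array}{c} X \\ Y \end{array}\right)_\phi \oplus \left(\begin{array}{c} X' \\ Y' \end{array}\right)_{\phi'}$, the $S$-component splits $S^{(K)}$, so $Y$ is projective; and since $M\otimes_S -$ preserves direct sums, the structure map of the sum is the block map $\phi \oplus \phi'$. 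Identifying this with $\iota$ then forces $\phi$ to be injective (a direct summand of an injection) with $\Coker\phi$ a direct summand of $\Coker\iota = R^{(K)}$, hence projective.

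The main obstacle is the bookkeeping in the necessity direction: I must ensure that the $T$-isomorphism exhibiting $\left(\begin{array}{c} X \\ Y \end{array}\right)_\phi$ as a summand of $T^{(K)}$ is genuinely compatible with the structure maps, i.e. that applying $M\otimes_S -$ to the splitting $Y\oplus Y' \cong S^{(K)}$ intertwines $\phi\oplus\phi'$ with $\iota$, so that injectivity and the shape of the cokernel are correctly inherited by the summand $\phi$. Once the exactness of $U_R, U_S$ and the fact that $M\otimes_S-$ commutes with coproducts are in hand, the remainder is formal.
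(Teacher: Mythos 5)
Your proof is correct, but there is nothing in the paper to compare it against: the paper states this lemma with the citation \cite[Theorem 3.1]{HV2} and gives no proof of its own, importing the result from Haghany--Varadarajan. Your argument is self-contained and sound, and it is essentially the standard proof of the cited result. The decomposition $_TT\cong Te_1\oplus Te_2\cong\left(\begin{smallmatrix} R \\ 0 \end{smallmatrix}\right)\oplus\left(\begin{smallmatrix} M \\ S \end{smallmatrix}\right)_1$ is right; your two adjunctions are precisely the pairs $(i_*,i^!)$ and $(j_!,j^*)$ that appear later in the paper's Lemma \ref{lem:5.1}, and since $U_R,U_S$ are exact (exactness in $T{\text-}\Mod$ being componentwise), their left adjoints preserve projectives. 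The sufficiency direction is fine: the splitting exists because $\Coker\phi$ is projective, and the commutativity check collapses exactly as you say. In the necessity direction, the ``bookkeeping'' you flag as the main obstacle is in fact automatic from the definitions: a $T$-isomorphism $(f,g):\left(\begin{smallmatrix} X \\ Y \end{smallmatrix}\right)_\phi\oplus\left(\begin{smallmatrix} X' \\ Y' \end{smallmatrix}\right)_{\phi'}\to T^{(K)}$ satisfies $f\circ(\phi\oplus\phi')=\iota\circ(1_M\otimes g)$ by the commutative-square condition defining $T$-morphisms, with $f$ and $1_M\otimes g$ isomorphisms (the latter because $M\otimes_S-$ is a functor and $g$ is invertible), so $\phi\oplus\phi'$ is injective and induces $\Coker\phi\oplus\Coker\phi'\cong\Coker\iota\cong R^{(K)}$, whence $\Coker\phi$ is projective. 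The only small conventions worth making explicit are $M\otimes_S S^{(K)}\cong M^{(K)}$ (tensor commutes with coproducts) and that the structure map of a direct sum of triples is the block sum of the structure maps, both of which you use correctly.
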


\begin{lem}\label{lem:2.3} Let $T=\left(
                                                                                 \begin{array}{cc}
                                                                                   R & M \\
                                                                                   0 & S \\
                                                                                 \end{array}
                                                                               \right)
$ be a triangular matrix ring.
\begin{enumerate}
\item[(1)] For any $X\in R{\text-}\Mod$, one has $\pd _RX=\pd_T\left(\begin{array}{c}X \\ 0 \\\end{array}\right)$.
\item[(2)] Suppose $\pd_RM<\infty$ and $Y\in S{\text-}\Mod$. Then $\pd_SY<\infty$ if and only if $\pd_T\left(\begin{array}{c}0 \\ Y \\\end{array}\right)<\infty$.
\end{enumerate}
\end{lem}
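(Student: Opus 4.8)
The plan is to treat the two parts separately, in each case exploiting that a sequence in $T{\text-}\Mod$ is exact precisely when both its $R$-component and its $S$-component are exact, together with the description of projective $T$-modules in Lemma \ref{lem:2.2}. Throughout I write $i_*X=\left(\begin{array}{c}X\\0\end{array}\right)$ and $j_*Y=\left(\begin{array}{c}0\\Y\end{array}\right)$ for the two inclusion functors, $j^*\left(\begin{array}{c}X\\Y\end{array}\right)_\phi=Y$ for the projection onto the $S$-component, and $i^*\left(\begin{array}{c}X\\Y\end{array}\right)_\phi=\Coker\phi$. The functors $i_*$, $j_*$ and $j^*$ are exact by the componentwise exactness criterion recorded above, while $i^*$ is right exact.

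For (1), the inequality $\pd_T i_*X\le\pd_R X$ is immediate: $i_*$ is exact and, by Lemma \ref{lem:2.2} with $Y=0$, carries projective $R$-modules to projective $T$-modules, so applying $i_*$ to a projective resolution of $X$ yields a projective $T$-resolution of $i_*X$ of the same length. For the reverse inequality I would start from a finite projective $T$-resolution $\left(\begin{array}{c}P_\bullet\\Q_\bullet\end{array}\right)\to i_*X$ of length $n$. Its $S$-component is a projective resolution of $0$, hence a split exact complex, so $M\otimes_S Q_\bullet$ is again exact. Applying $i^*$ (which sends projective $T$-modules to projective $R$-modules by Lemma \ref{lem:2.2} and satisfies $i^*i_*X=X$) produces a complex $C_\bullet$ of projective $R$-modules, and the componentwise short exact sequences $0\to M\otimes_S Q_i\xrightarrow{\phi_i}P_i\to C_i\to 0$ assemble into a short exact sequence of complexes. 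Since $M\otimes_S Q_\bullet$ is exact, the long exact homology sequence gives $H(C_\bullet)\cong H(P_\bullet)$, which is concentrated in a single degree and equal to $X$; thus $C_\bullet$ is a projective $R$-resolution of $X$ of length $\le n$, so $\pd_R X\le\pd_T i_*X$. Combining the two inequalities, and noting that both sides are simultaneously infinite, gives equality.

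For (2), one inequality needs no hypothesis: $j^*$ is exact and carries projective $T$-modules to projective $S$-modules by Lemma \ref{lem:2.2}, so applying $j^*$ to a $T$-projective resolution of $j_*Y$ resolves $j^*j_*Y=Y$ and yields $\pd_S Y\le\pd_T j_*Y$; in particular $\pd_T j_*Y<\infty$ forces $\pd_S Y<\infty$. The substantial direction is that $\pd_S Y<\infty$ implies $\pd_T j_*Y<\infty$, and here the assumption $\pd_R M<\infty$ enters. The main tool is the short exact sequence $0\to i_*(M\otimes_S Y)\to\left(\begin{array}{c}M\otimes_S Y\\Y\end{array}\right)_1\to j_*Y\to 0$ in $T{\text-}\Mod$, which I would combine with induction on $m=\pd_S Y$. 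In the base case $Y=Q$ is projective over $S$: the middle term is then a projective $T$-module by Lemma \ref{lem:2.2}, while by part (1) the left-hand term has $\pd_T i_*(M\otimes_S Q)=\pd_R(M\otimes_S Q)\le\pd_R M<\infty$ since $M\otimes_S Q$ is a direct summand of a direct sum of copies of $M$; the standard estimate $\pd_T j_*Q\le\max\{0,\pd_R M+1\}$ then gives finiteness. For the inductive step I would pick a short exact sequence $0\to Y'\to Q\to Y\to 0$ over $S$ with $Q$ projective and $\pd_S Y'=m-1$, apply the exact functor $j_*$, and conclude from the base case together with the inductive hypothesis.

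I expect the only real obstacle to be the hard direction of (2). The delicate point is that the natural functor $j_!\colon Y\mapsto\left(\begin{array}{c}M\otimes_S Y\\Y\end{array}\right)_1$ is only right exact, so one cannot simply apply it to a projective resolution of $Y$ and keep exactness of the top component; routing the argument through the short exact sequence above and an induction circumvents this, and it is exactly the control of $\pd_R(M\otimes_S Q)$ for projective $Q$ that requires, and is guaranteed by, the hypothesis $\pd_R M<\infty$.
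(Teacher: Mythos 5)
Your proposal is correct and follows essentially the same route as the paper: the easy direction of (2) is obtained by taking $S$-components of a finite projective $T$-resolution, and the hard direction by induction on $\pd_SY$ using the short exact sequence $0\to i_*(M\otimes_SY)\to\left(\begin{array}{c}M\otimes_SY\\Y\end{array}\right)\to j_*Y\to 0$, whose middle term is projective and whose left term has finite projective dimension by part (1) and $\pd_RM<\infty$. The only difference is cosmetic: for part (1), which the paper settles with a one-line reference to choosing a projective resolution, you spell out the reverse inequality in full (applying $i^*$ to a $T$-resolution, using split-exactness of the bounded exact complex $Q_\bullet$ of projectives and the long exact homology sequence), which is exactly the intended fleshing-out of that sketch.
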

\begin{proof} (1) could be easily checked by choosing a projective resolution (see also \cite[Lemma 3.1]{C1}). For (2), suppose $\pd_T\left(\begin{array}{c}0 \\ Y \\\end{array}\right)=n<\infty$.
Then there exists a projective resolution of $\left(\begin{array}{c}0 \\ Y \\\end{array}\right)$ $$0\to\left(
                                                 \begin{array}{c}
                                                   P_n \\
                                                   Q_n \\
                                                 \end{array}
                                               \right)_{\phi_n}\to\left(
                                                 \begin{array}{c}
                                                   P_{n-1} \\
                                                   Q_{n-1} \\
                                                 \end{array}
                                               \right)_{\phi_{n-1}}\to\cdots\to\left(
                                                 \begin{array}{c}
                                                   P_1 \\
                                                   Q_1 \\
                                                 \end{array}
                                               \right)_{\phi_1}\to\left(
                                                 \begin{array}{c}
                                                   P_0 \\
                                                   Q_0 \\
                                                 \end{array}
                                               \right)_{\phi_0}\to\left(\begin{array}{c}0 \\ Y \\\end{array}\right)\to0.$$ Hence we get an exact sequence of left $S$-modules
                                               $$0\to Q_n\to Q_{n-1}\to\cdots\to Q_1\to Q_0\to Y\to0.$$
                                               Since each $\left(
                                                 \begin{array}{c}
                                                   P_i \\
                                                   Q_i \\
                                                 \end{array}
                                               \right)_{\phi_i}\in T{\text-}\Proj$, $Q_i\in S{\text-}\Proj$. Thus the above exact sequence is a projective resolution of $Y$, and hence $\pd_SY\leq n$.
                                               Conversely, assume that $\pd_S Y=n<\infty$, we will proceed by induction on $n$. If $n=0$, then $Y$ is projective.
Consider the following exact sequence of left $T$-modules
$$0\to \left(\begin{array}{c} M\otimes_S Y \\ 0 \\\end{array}\right)\to\left(\begin{array}{c} M\otimes_S Y \\ Y \\\end{array}\right)\to\left(\begin{array}{c} 0 \\ Y \\\end{array}\right)\to0.$$
Since $\pd_RM<\infty$ and $Y\in S{\text-}\Proj$, this implies $\pd_RM\otimes_S Y<\infty$ and then $\pd_T\left(\begin{array}{c} M\otimes_S Y \\ 0 \\\end{array}\right)<\infty$ by (1). Note that
$\left(\begin{array}{c} M\otimes_S Y \\ Y \\\end{array}\right)\in T{\text-}\Proj$. Then $\pd_T\left(\begin{array}{c}0 \\ Y \\\end{array}\right)<\infty$. Now suppose $\pd_SY=n\geq1$ and the assertion holds true for any interger less than $n$. Then we have an exact sequence of $S$-modules $0\to K\to Q\to Y\to0$ with $Q\in S{\text-}\Proj$ and $\pd_SK=n-1$. Hence the following is an exact sequence of left $T$-modules:
$$0\to\left(
        \begin{array}{c}
          0 \\
          K \\
        \end{array}
      \right)\to\left(
        \begin{array}{c}
          0 \\
          Q \\
        \end{array}
      \right)\to\left(
        \begin{array}{c}
          0 \\
          Y \\
        \end{array}
      \right)\to0
.$$  By the induction hypothesis, both $\left(
        \begin{array}{c}
          0 \\
          K \\
        \end{array}
      \right)$ and $\left(
        \begin{array}{c}
          0 \\
          Q \\
        \end{array}
      \right)$ are of finite projective dimensions, so is $\left(
        \begin{array}{c}
          0 \\
          Y\\
        \end{array}
      \right)$.
\end{proof}

Denote by $e_S=\left(
     \begin{array}{cc}
       0& 0 \\
       0 & 1 \\
     \end{array}
   \right)$
the idempotent element of $T$. Note that $R\cong T/Te_ST$ as rings. Thus, every left or right $R$-module has a natural $T$-module structure. For example, $_TR$ is identified with $\left(
                    \begin{array}{c}
                      R \\0
                    \end{array}
                  \right)$ and $R_T$ is identified with $\left(
                    \begin{array}{cc}
                      R & 0
                    \end{array}
                  \right)$. On the other hand, $e_R=\left(
     \begin{array}{cc}
       1& 0 \\
       0 & 0 \\
     \end{array}
   \right)$ is another idempotent element of $T$. We have the ring homomorphism $S\cong T/Te_RT$, so every left or right $S$-module has a natural $T$-module structure in a similar way.

\vspace{0.3cm}
We have the following facts.

\begin{lem}\label{lem:2.5} Let $R$ and $S$ be two rings, ${_R}M_S$ an $R$-$S$-bimodule, and $T=\left(
                                                                                 \begin{array}{cc}
                                                                                   R & M \\
                                                                                   0 & S \\
                                                                                 \end{array}
                                                                               \right)
$ the triangular matrix ring. The following statements hold true:

(1) $_RM$ is of finite projective dimension if and only if $_TS$ is of finite projective dimension.

(2) $M_S$ is of finite projective  dimension if and only if $R_T$ is of finite projective  dimension.
\end{lem}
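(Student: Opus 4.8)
The plan is to realise each of the two one-sided modules as the cokernel of a short exact sequence $0\to A\to P\to C\to0$ in which $P$ is a direct summand of $T$ (hence projective) and $A$ is a ``corner'' module whose projective dimension is already under control; I then appeal to the elementary fact that for such a sequence $\pd C<\infty$ if and only if $\pd A<\infty$ (if $C$ is projective the sequence splits and $A$ is projective, otherwise $A$ is a syzygy of $C$ and $\pd A=\pd C-1$).

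For (1), recall that $_TS$ is the left $T$-module $\left(\begin{array}{c}0 \\ S\end{array}\right)$. I would use the short exact sequence of left $T$-modules
$$0\to\left(\begin{array}{c}M\otimes_SS \\ 0\end{array}\right)\to\left(\begin{array}{c}M\otimes_SS \\ S\end{array}\right)\to\left(\begin{array}{c}0 \\ S\end{array}\right)\to0,$$
whose middle term $\left(\begin{array}{c}M\otimes_SS \\ S\end{array}\right)=Te_S$ is the second column of $T$, a direct summand of $_TT$ and hence projective (this is also covered by Lemma \ref{lem:2.2}). By Lemma \ref{lem:2.3}(1), $\pd_T\left(\begin{array}{c}M\otimes_SS \\ 0\end{array}\right)=\pd_R(M\otimes_SS)=\pd_RM$. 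The elementary fact above then yields $\pd_T\left(\begin{array}{c}0 \\ S\end{array}\right)<\infty$ if and only if $\pd_RM<\infty$, which is statement (1).

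For (2), $R_T$ is the right $T$-module $\left(\begin{array}{cc}R & 0\end{array}\right)$, and the symmetric sequence to use is
$$0\to\left(\begin{array}{cc}0 & M\end{array}\right)\to\left(\begin{array}{cc}R & M\end{array}\right)\to\left(\begin{array}{cc}R & 0\end{array}\right)\to0,$$
whose middle term $\left(\begin{array}{cc}R & M\end{array}\right)=e_RT$ is the first row of $T$, again a direct summand of $T_T$ and hence projective. The step that needs justification is the identity $\pd_T\left(\begin{array}{cc}0 & M\end{array}\right)=\pd M_S$, the right-hand counterpart of Lemma \ref{lem:2.3}(1): for $Y\in\Mod S$ one has $\pd_T\left(\begin{array}{cc}0 & Y\end{array}\right)=\pd_SY$. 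I would obtain this either by passing to $T^{\mathrm{op}}$, which is again a triangular matrix ring (with the roles of $R$ and $S$ interchanged), so that the left-module statement applies verbatim; or directly by noting that $\left(\begin{array}{cc}0 & Y\end{array}\right)=Y\otimes_Se_ST$ with $e_ST$ free as a left $S$-module and projective as a right $T$-module, so that the adjoint pair $\bigl(-\otimes_Se_ST,\ \Hom_T(e_ST,-)\bigr)$ gives $\Ext^i_T\bigl(\left(\begin{array}{cc}0 & Y\end{array}\right),\left(\begin{array}{cc}0 & L\end{array}\right)\bigr)\cong\Ext^i_S(Y,L)$ for all $L\in\Mod S$, forcing equality of the two dimensions. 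Concluding exactly as in (1) then gives $\pd R_T<\infty$ if and only if $\pd M_S<\infty$.

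The work here is organisational rather than deep, and the one thing that must be handled carefully is which corner is ``nice'' on each side: for left modules it is the $R$-corner $\left(\begin{array}{c}X \\ 0\end{array}\right)$ whose projective dimension matches unconditionally (Lemma \ref{lem:2.3}(1)), while for right modules the privileged corner is instead the $S$-corner $\left(\begin{array}{cc}0 & Y\end{array}\right)$; this interchange is exactly what the passage to $T^{\mathrm{op}}$ produces. I would also stress that both arguments avoid any finiteness assumption on $M$: the two short exact sequences deliver the finiteness equivalence on their own, so there is no circular dependence on Lemma \ref{lem:2.3}(2), which itself presupposes $\pd_RM<\infty$.
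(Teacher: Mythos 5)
Your proof is correct and is essentially the paper's argument: the paper uses the sequence $0\to Te_RT\to T\to {}_TS\to0$ together with the decomposition $Te_RT\cong\left(\begin{smallmatrix}R\\0\end{smallmatrix}\right)\oplus\left(\begin{smallmatrix}M\\0\end{smallmatrix}\right)$ and Lemma~\ref{lem:2.3}(1), which is exactly your sequence for (1) with the redundant projective summand $\left(\begin{smallmatrix}R\\0\end{smallmatrix}\right)$ carried along, both arguments then finishing with the same syzygy/finiteness observation. The only place you go beyond the paper is part (2), where the paper merely says ``the proof of (2) is similar'': your explicit justification of the right-module analogue of Lemma~\ref{lem:2.3}(1) (via $T^{\mathrm{op}}$, or via the adjunction $\bigl(-\otimes_Se_ST,\ \Hom_T(e_ST,-)\bigr)$) is precisely the detail that ``similar'' suppresses, and it is sound.
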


\begin{proof} We only prove (1) and the proof of (2) is similar. Actually, consider the following exact sequence of left $T$-modules
$$0\to\left(
        \begin{array}{cc}
          R & M \\
          0 & 0 \\
        \end{array}
      \right)\to T\to {_TS}\to0.$$
It is easy to verify $\left(
        \begin{array}{cc}
          R & M \\
          0 & 0 \\
        \end{array}
      \right)\cong\left(
                         \begin{array}{c}
                           R \\ 0
                         \end{array}
                       \right)\oplus\left(
                         \begin{array}{c}
                           M \\ 0
                         \end{array}
                       \right)$.
We get that $\left(
                         \begin{array}{c}
                           M \\ 0
                         \end{array}
                       \right)$ is of finite projective dimension if and only if $_TS$ is of finite projective dimension, since $T$ and $\left(
                         \begin{array}{c}
                           R\\ 0
                         \end{array}
                       \right)$ are projective left $T$-modules.
By Lemma \ref{lem:2.3}, $\pd_T\left(
                         \begin{array}{c}
                           M \\ 0
                         \end{array}
                       \right)=\pd_RM$. Then this assertion follows.
\end{proof}

Recall from \cite{BBD} that a {\it recollement of triangulated categories} is a diagram of triangulated categories and triangle-functors
$$\xymatrix{\T'\ar[r]^{i_*}&\T\ar@/^1pc/[l]^{i^!}\ar@/_1pc/[l]_{i^*}\ar[r]^{j^*} &\T''\ar@/^1pc/[l]^{j_*}\ar@/_1pc/[l]_{j_!} }\eqno{(2.1)}$$
satisfying:

(R1) $(i^*,i_*)$, $(i_*,i^!)$, $(j_!,j^*)$ and $(j^*,j_*)$ are adjoint pairs;

(R2) $i_*$, $j_!$ and $j_*$ are fully faithful;

(R3) $j^*i_*=0$;

(R4) For each $X\in\T$, there are two triangles in $\T$ induced by counit and unit adjunctions:

$$i_*i^!X\to X\to j_*j^*X\to i_*i^!X[1] \ \textrm{and} \ j_!j^*X\to X\to i_*i^*X\to j_!j^*X[1],$$

 where $[1]$ is the shift functor of $\T$.

\vspace{2mm}
By a {\it left recollement of triangulated categories}, we mean a diagram of triangulated categories and triangle-functors consisting of the upper two rows of (2.1), satisfying conditions (R1)-(R4) which involve only the functors
$i^*,i_*,j_!$ and $j^*$.

We call diagram (2.1) a {\it recollement of abelian categories}, if $\T, \T', \T''$ in diagram (2.1) are abelian categories, the six functors involving are additive functors and conditions (R1), (R2) and (R5) are satisfied, where

(R5) $\Im i_*=\Ker j^*.$

\vspace{2mm}
Let $A$ be a ring and $e\in A$ be an idempotent. Then the natural ring homomorphism $\pi:A\to A/AeA$ is an epimorphism (in the category of rings), and hence we have a fully faithful exact functor, denoted by $i_*:A/AeA{\text-}\Mod\to A{\text-}\Mod$. Moreover, $i_*$ has a left adjoint $i^*=A/AeA\otimes_A-$ and a right adjoint $i^!=\Hom_A(A/AeA,-)$. On the other hand, if we consider the {\it restriction functor} (see \cite[I.6]{ASS}) $j^*: A{\text-}\Mod\to eAe{\text-}\Mod$ defined by $j^*(M)=eM$, $j^*$ also has a left adjoint $j_!=Ae\otimes_{eAe}-$ and a right adjoint $j_*=\Hom_{eAe}(eA,-)$. We have the following.

\begin{lem}\label{lem:2.6} Let $A$ be a ring and $e\in A$ be an idempotent.
\begin{enumerate}
\item[(1)] (\cite{P}) We have the following recollement of module categories:
$$\xymatrix{A/AeA{\text-}\Mod\ar[r]^{i_*}& A{\text-}\Mod\ar@/^1pc/[l]^{i^!}\ar@/_1pc/[l]_{i^*}\ar[r]^{j^*} & eAe{\text-}\Mod\ar@/^1pc/[l]^{j_*}\ar@/_1pc/[l]_{j_!}.}$$
\item[(2)]  (\cite{CPS1,CPS2,PS})  Assume that
(i) $\Ext_A^n(A/AeA,P)=0$ for every left projective $A$-module $P$ and every integer $n>0$;
(ii) $A/AeA$ has finite projective dimensions both as a left and right $A$-module;
(iii) $\pd_{eAe}eA<\infty$;
(iv) $\pd Ae_{eAe}<\infty$.
Then there exists a recollement of the bounded derived categories:
$$\xymatrix{\D^b( A/AeA{\text-}\Mod)\ar[r]^{\D^b(i_*)}&\D^b( A{\text-}\Mod)\ar@/^1pc/[l]^{\mathbb{R}^b(i^!)}\ar@/_1pc/[l]_{\mathbb{L}^b(i^*)}\ar[r]^{\D^b(j^*)} &\D^b(eAe{\text-}\Mod )\ar@/^1pc/[l]^{\mathbb{R}^b(j_*)}\ar@/_1pc/[l]_{\mathbb{L}^b(j_!)}, }$$
where these six functors are the derived versions of those mentioned above.
\end{enumerate}
\end{lem}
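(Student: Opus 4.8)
For part (1), the plan is to verify the axioms (R1), (R2) and (R5) for an abelian recollement. Condition (R1) is the existence of the two adjoint triples. The triple $(i^*,i_*,i^!)$ is the standard one attached to the ring epimorphism $\pi\colon A\to A/AeA$: restriction of scalars $i_*$ has left adjoint $i^*=A/AeA\otimes_A-$ and right adjoint $i^!=\Hom_A(A/AeA,-)$. The triple $(j_!,j^*,j_*)$ comes from the tensor–hom adjunctions for the bimodules $Ae$ and $eA$, using the natural identifications $j^*(M)=eM\cong\Hom_A(Ae,M)\cong eA\otimes_A M$, which exhibit $j^*$ simultaneously as a right adjoint of $j_!=Ae\otimes_{eAe}-$ and a left adjoint of $j_*=\Hom_{eAe}(eA,-)$. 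For (R2), $i_*$ is fully faithful because $\pi$ is a ring epimorphism; the full faithfulness of $j_!$ and $j_*$ follows from $j^*j_!\cong\id$ and $j^*j_*\cong\id$, both of which reduce to $e(Ae)=eAe$ together with $\Hom_{eAe}(eAe,-)\cong\id$. Finally (R5) is the identity $\Ker j^*=\{M:eM=0\}=\{M:AeA\cdot M=0\}=\Im i_*$, using that $eM=0$ forces $AeAM=A(eAM)=0$. This yields the abelian recollement of part (1).

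For part (2), the plan is to derive the six functors and check that they assemble into a recollement of triangulated categories. The exact functors $i_*$ and $j^*$ pass directly to $\D^b(i_*)$ and $\D^b(j^*)$; the remaining four are the total derived functors $\mathbb{L}^b(i^*)=A/AeA\otimes_A^{\mathbb{L}}-$, $\mathbb{R}^b(i^!)=\mathbb{R}\Hom_A(A/AeA,-)$, $\mathbb{L}^b(j_!)=Ae\otimes_{eAe}^{\mathbb{L}}-$ and $\mathbb{R}^b(j_*)=\mathbb{R}\Hom_{eAe}(eA,-)$. The first step is boundedness: I would use (ii) to bound $\mathbb{L}^b(i^*)$ and $\mathbb{R}^b(i^!)$ (finite projective dimension of $A/AeA$ as a right, resp. left, $A$-module), (iv) to bound $\mathbb{L}^b(j_!)$, and (iii) to bound $\mathbb{R}^b(j_*)$; in each case the finiteness guarantees finite cohomological amplitude, so the functor preserves $\D^b$. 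The four module-level adjunctions of part (1) then descend to derived adjunctions, giving (R1), and $\D^b(j^*)\D^b(i_*)=0$ (giving (R3)) follows from $j^*i_*=0$ since both functors are exact.

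The remaining point is (R2) at the derived level. Full faithfulness of $\mathbb{L}^b(j_!)$ and $\mathbb{R}^b(j_*)$ is comparatively soft: since $Ae$ and $eA$ are direct summands of $A$, hence projective on the appropriate side, the abelian isomorphisms $j^*j_!\cong\id$ and $j^*j_*\cong\id$ derive without obstruction to $\D^b(j^*)\mathbb{L}^b(j_!)\cong\id$ and $\D^b(j^*)\mathbb{R}^b(j_*)\cong\id$. The crux is the full faithfulness of $\D^b(i_*)$, equivalently that $\pi\colon A\to A/AeA$ is a homological ring epimorphism; this is exactly where hypotheses (i) and (ii) are used, and I expect it to be the main obstacle. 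I would establish it by showing that the unit of the adjunction $(\D^b(i_*),\mathbb{R}^b(i^!))$ (equivalently the counit of $(\mathbb{L}^b(i^*),\D^b(i_*))$) is an isomorphism, reducing by d\'evissage to the vanishing of the higher $\Tor$ and $\Ext$ groups of $A/AeA$ over $A$; condition (i) (self-orthogonality of $A/AeA$ against projectives) together with the finiteness in (ii) is precisely what forces this vanishing.

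Once $\D^b(i_*)$ is fully faithful with essential image a thick triangulated subcategory, I would lift the identity $\Ker j^*=\Im i_*$ from part (1) to $\Ker\D^b(j^*)=\Im\D^b(i_*)$, so that $\D^b(j^*)$ becomes a Verdier quotient functor with this kernel. The localization triangles required in (R4) are then produced from the counit and unit of the derived adjunctions in the standard way, which completes the recollement of bounded derived categories.
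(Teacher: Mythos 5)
Your proposal is correct in substance, but note that the paper itself gives no proof of this lemma: part (1) is quoted from Psaroudakis \cite{P} and part (2) from Cline--Parshall--Scott and Parshall--Scott \cite{CPS1,CPS2,PS}, so there is no in-paper argument to compare against. What you have written is a faithful reconstruction of the standard proof from those references: the abelian recollement via the two adjoint triples $(A/AeA\otimes_A-,\ \mathrm{res},\ \Hom_A(A/AeA,-))$ and $(Ae\otimes_{eAe}-,\ e(-),\ \Hom_{eAe}(eA,-))$ together with $\Ker j^*=\Im i_*$; boundedness of the four derived functors from the finiteness hypotheses (ii)--(iv), matched up exactly as you say; softness of $\D^b(j^*)\mathbb{L}^b(j_!)\cong\id$ and $\D^b(j^*)\mathbb{R}^b(j_*)\cong\id$; and the identification of the full faithfulness of $\D^b(i_*)$ as the crux.

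Two points are worth making explicit. First, the step you flag as the main obstacle does go through, and by exactly the d\'evissage you sketch, but it is cleaner than you suggest: for any $A$-module $Y$ and any syzygy sequence $0\to\Omega Y\to P\to Y\to0$, hypothesis (i) kills both $\Ext^n_A(A/AeA,P)$ and $\Ext^{n+1}_A(A/AeA,P)$, so $\Ext^n_A(A/AeA,Y)\cong\Ext^{n+1}_A(A/AeA,\Omega Y)$ for $n\geq1$; iterating past $\pd_A(A/AeA)<\infty$ from (ii) gives $\Ext^n_A(A/AeA,Y)=0$ for \emph{all} $Y$ and all $n>0$. Since $\Hom_A(A/AeA,X)=X$ for any $A/AeA$-module $X$, the unit $\id\to\mathbb{R}^b(i^!)\D^b(i_*)$ is an isomorphism on stalk complexes, hence on all of $\D^b(A/AeA{\text-}\Mod)$ by induction on amplitude, and full faithfulness of $\D^b(i_*)$ follows; no separate $\Tor$ computation is needed. (Incidentally, this vanishing for all $Y$ forces $_A(A/AeA)$ to be projective, which is consistent with the paper's application, where $A=T$ and $A/AeA\cong Te_R$.) Second, a small imprecision: the reason $j^*j_!\cong\id$ and $j^*j_*\cong\id$ ``derive without obstruction'' is not the one-sided projectivity of $Ae$ and $eA$ over $A$ per se, but the exactness of $j^*$, which lets you compute $\D^b(j^*)\mathbb{L}^b(j_!)(Y^\bullet)$ on an $eAe$-projective resolution $P^\bullet$ of $Y^\bullet$ as $eAe\otimes_{eAe}P^\bullet\simeq Y^\bullet$, and dually for $j_*$. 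With these clarifications your outline compiles into a complete proof along the same lines as the cited sources.
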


\section{\bf Gorenstein projective modules over triangular matrix rings}\label{Gor}

In this section, let $R$ and $S$ be two rings, ${_R}M_S$ an $R$-$S$-bimodule. Denote by $T=\left(
                                                                                 \begin{array}{cc}
                                                                                   R & M \\
                                                                                   0 & S \\
                                                                                 \end{array}
                                                                               \right)$ the triangular matrix ring. When $T$ is an Artin algebra, the class of finitely generated Gorenstein projective modules over $T$ is explicitly described by Zhang \cite{ZP}. Then Enochs, Cort\'{e}s-Izurdiaga and Torrecillas characterized in \cite{ECT} when an arbitrary left $T$-module is Gorenstein projective in the case that $R$ is left Gorenstein regular.
In this section, we will study when an arbitrary left $T$-module is Gorenstein projective.

\begin{lem}\label{lem:3.3} Let $T=\left(
                                                                                 \begin{array}{cc}
                                                                                   R & M \\
                                                                                   0 & S \\
                                                                                 \end{array}
                                                                               \right)
$ be a triangular matrix ring with $\pd_RM<\infty$ and $\fd M_S<\infty$. The following statements hold true:

(1) If $Y\in S{\text-}\GProj$, then $\left(
                                                                                        \begin{array}{c}
                                                                                          M\otimes_SY \\
                                                                                          Y \\
                                                                                        \end{array}
                                                                                      \right)\in T{\text-}\GProj$;

(2) If $X\in R{\text-}\GProj$, then $\left(
                                                                                        \begin{array}{c}
                                                                                          X \\
                                                                                          0 \\
                                                                                        \end{array}
                                                                                      \right)\in T{\text-}\GProj$.
\end{lem}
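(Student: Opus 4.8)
The plan is to construct, for each assertion, an explicit complete projective resolution of $T$-modules whose $0$th cycle is the claimed module, and to verify the defining acyclicity condition using Lemma \ref{lem:2.1}. For part (1), since $Y\in S\text{-}\GProj$, by definition there is a complete projective complex $Q^\bullet=(Q^i,\partial^i)$ of $S$-modules with $Y\cong Z^0(Q^\bullet)$, meaning $Q^\bullet$ is acyclic, each $Q^i\in S\text{-}\Proj$, and $\Hom_S(Q^\bullet,Q)$ is acyclic for every projective $S$-module $Q$. I would apply the functor $M\otimes_S-$ to $Q^\bullet$ to form $M\otimes_S Q^\bullet$. Because $\fd M_S<\infty$, Lemma \ref{lem:2.1}(2) guarantees that $M\otimes_S Q^\bullet$ is an acyclic complex of $R$-modules. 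I then assemble the complex of $T$-modules $\left(\begin{array}{c} M\otimes_S Q^\bullet \\ Q^\bullet \end{array}\right)$ with structure maps the identity, i.e.\ the $i$th term is $\left(\begin{array}{c} M\otimes_S Q^i \\ Q^i \end{array}\right)_1$, which is a projective $T$-module by Lemma \ref{lem:2.2}. Using the exactness criterion for sequences of $T$-modules recalled in the preliminaries (componentwise exactness on the $R$- and $S$-rows), this is an acyclic complex of projective $T$-modules whose $0$th cycle is $\left(\begin{array}{c} M\otimes_S Y \\ Y \end{array}\right)$.

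The essential remaining point in part (1) is to check the Hom-orthogonality condition: for an arbitrary projective $T$-module $P$, the complex $\Hom_T\left(\left(\begin{array}{c} M\otimes_S Q^\bullet \\ Q^\bullet \end{array}\right),P\right)$ must be acyclic. Here I would use Lemma \ref{lem:2.2} to write $P$ (up to summands) in terms of its rows and reduce, via the standard adjunction/Hom formulas for $T$-modules over a triangular matrix ring, the computation of $\Hom_T$ into a combination of $\Hom_R$ of the top row against the top component of $P$ and $\Hom_S$ of the bottom row against the bottom component. The $\Hom_S$ contributions are acyclic because $Q^\bullet$ is complete projective; the $\Hom_R$ contributions involve $M\otimes_S Q^\bullet$ against a projective $R$-module, and this is where the hypothesis $\pd_RM<\infty$ enters through Lemma \ref{lem:2.1}(1), together with the fact that $\pd_R(M\otimes_S Q)<\infty$ for projective $Q$ when $\pd_RM<\infty$. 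I expect this orthogonality verification to be the main obstacle, since it requires carefully unwinding the Hom-description of morphisms between triangular-matrix modules and tracking which finiteness hypothesis controls each summand.

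For part (2), the argument is shorter. Given $X\in R\text{-}\GProj$, take a complete projective complex $P^\bullet$ of $R$-modules with $X\cong Z^0(P^\bullet)$, and form the complex of $T$-modules $\left(\begin{array}{c} P^\bullet \\ 0 \end{array}\right)$. By Lemma \ref{lem:2.2} each term $\left(\begin{array}{c} P^i \\ 0 \end{array}\right)$ is projective over $T$ (taking $Y=0$, so $\phi$ is the inclusion $0\to P^i$ with projective cokernel $P^i$), and acyclicity of the complex is immediate from acyclicity of $P^\bullet$ on the top row and the zero bottom row. The $0$th cycle is $\left(\begin{array}{c} X \\ 0 \end{array}\right)$. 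Finally, for the orthogonality condition, $\Hom_T\left(\left(\begin{array}{c} P^\bullet \\ 0 \end{array}\right),P\right)$ reduces to $\Hom_R(P^\bullet,-)$ evaluated on the top row of $P$, which is a projective $R$-module, so acyclicity follows directly from the completeness of $P^\bullet$ without needing any further finiteness hypothesis. Thus $\left(\begin{array}{c} X \\ 0 \end{array}\right)\in T\text{-}\GProj$.
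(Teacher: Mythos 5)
Your constructions agree with the paper's, but both orthogonality checks go wrong, and in a revealing way: you have the role of the hypothesis $\pd_RM<\infty$ exactly backwards. The serious error is in part (2). You claim that $\Hom_T\left(\left(\begin{smallmatrix}P^\bullet\\0\end{smallmatrix}\right),\left(\begin{smallmatrix}P\\Q\end{smallmatrix}\right)_\phi\right)$ reduces to $\Hom_R(P^\bullet,P)$ with $P$ ``a projective $R$-module,'' so that acyclicity follows ``without needing any further finiteness hypothesis.'' The reduction to $\Hom_R(P^\bullet,P)$ is correct, but the top row of a projective $T$-module is \emph{not} projective over $R$: by Lemma \ref{lem:2.2} it is only an extension $0\to M\otimes_S Q\xrightarrow{\phi}P\to\Coker\phi\to0$ with $Q\in S{\text-}\Proj$ and $\Coker\phi\in R{\text-}\Proj$; for instance the indecomposable projective $\left(\begin{smallmatrix}M\otimes_S Q\\Q\end{smallmatrix}\right)$ has top row $M\otimes_S Q$, which is projective over $R$ only when $_RM$ is. This is precisely where $\pd_RM<\infty$ must be used: as in the paper, apply $\Hom_R(P^\bullet,-)$ to that short exact sequence to get a short exact sequence of complexes whose outer terms are acyclic by the complete-projective clause of Lemma \ref{lem:2.1}(1) (the kernel because $\pd_R(M\otimes_S Q)<\infty$, the cokernel because $\Coker\phi$ is projective), whence the middle term $\Hom_R(P^\bullet,P)$ is acyclic. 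Without this step your part (2) does not close.

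Conversely, in part (1) you expect a ``$\Hom_R$ contribution'' that needs $\pd_RM<\infty$ via Lemma \ref{lem:2.1}(1). There is no such contribution, and the step you sketch would not even be legal: the terms $M\otimes_S Q^i$ are not projective $R$-modules, so neither clause of Lemma \ref{lem:2.1}(1) applies to $\Hom_R(M\otimes_S Q^\bullet,-)$, and that complex need not be acyclic in general. The correct observation (and the paper's) is that your complex is $j_!(Q^\bullet)$ for $j_!=Te_S\otimes_S-$: a $T$-morphism $\left(\begin{smallmatrix}M\otimes_S Q^i\\Q^i\end{smallmatrix}\right)_1\to\left(\begin{smallmatrix}P\\Q\end{smallmatrix}\right)_\phi$ is a pair $(f,g)$ in which $f=\phi\circ(1\otimes g)$ is forced by the commuting square, so $\Hom_T\left(j_!(Q^\bullet),\left(\begin{smallmatrix}P\\Q\end{smallmatrix}\right)_\phi\right)\cong\Hom_S(Q^\bullet,Q)$, which is acyclic simply because $Q^\bullet$ is complete projective and $Q\in S{\text-}\Proj$. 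Thus part (1) uses only $\fd M_S<\infty$, and the entire weight of the hypothesis $\pd_RM<\infty$ falls on part (2), not on part (1) as your proposal has it.
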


\begin{proof} (1) Assume that $Y\in S{\text-}\GProj$. Then there exists a complete projective $S$-complex
$$G^\bullet=\cdots\to G^{-1}\xrightarrow{d^{-1}}G^0\xrightarrow{d^{0}}G^1\to\cdots$$
such that $Y\cong Z^0(G^\bullet)$. Since $\fd M_S<\infty$, following Lemma \ref{lem:2.1}, $M\otimes_S G^\bullet$ is an acyclic complex with $M\otimes_SY\cong Z^0(M\otimes_S G^\bullet)$. Now we get an acyclic $T$-complex
$$\widetilde{G^\bullet}=\cdots\to\left(\begin{array}{c}M\otimes_SG^{-1} \\G^{-1} \\\end{array}\right)\xrightarrow{\left(\begin{array}{c}1\otimes d^{-1} \\d^{-1} \\\end{array}\right)}\left(\begin{array}{c}M\otimes_SG^{0} \\G^{0} \\\end{array}\right)\xrightarrow{\left(\begin{array}{c}1\otimes d^{0} \\d^{0} \\\end{array}\right)}\left(\begin{array}{c}M\otimes_SG^{1} \\G^{1} \\\end{array}\right)\to\cdots$$ with $\left(
                                                                                        \begin{array}{c}
                                                                                          M\otimes_SY \\
                                                                                          Y \\
                                                                                        \end{array}
                                                                                      \right)\cong Z^0(\widetilde{G^\bullet})$.
It follows from Lemma \ref{lem:2.2} that each $\left(\begin{array}{c}M\otimes_SG^{i} \\G^{i} \\\end{array}\right)\in T{\text-}\Proj$.
Let $\left(\begin{array}{c}P \\Q \\\end{array}\right)_{\phi}\in T{\text-}\Proj$. Since $Q\in S{\text-}\Proj$, $\Hom_T(\widetilde{G^\bullet},\left(\begin{array}{c}P \\Q \\\end{array}\right)_{\phi})\cong\Hom_S(G^\bullet,Q)$ is acyclic. Then $\widetilde{G^\bullet}$ is a complete projective $T$-complex and hence $\left(
                                                                                        \begin{array}{c}
                                                                                          M\otimes_SY \\
                                                                                          Y \\
                                                                                        \end{array}
                                                                                      \right)\in T{\text-}\GProj$.

(2) Assume that $X\in R{\text-}\GProj$. Then there exists a complete projective $R$-complex
$$F^\bullet=\cdots\to F^{-1}\xrightarrow{d^{-1}}F^0\xrightarrow{d^{0}}F^1\to\cdots$$
such that $X\cong Z^0(F^\bullet)$. Consider the following acyclic complex
$$\widetilde{F^\bullet}=\cdots\to\left(\begin{array}{c}F^{-1} \\0 \\\end{array}\right)\xrightarrow{\left(\begin{array}{c} d^{-1} \\0 \\\end{array}\right)}\left(\begin{array}{c}F^{0} \\0 \\\end{array}\right)\xrightarrow{\left(\begin{array}{c}d^{0} \\0 \\\end{array}\right)}\left(\begin{array}{c}F^{1} \\0 \\\end{array}\right)\to\cdots.$$
Note that each $\left(\begin{array}{c}F^{i} \\0 \\\end{array}\right)\in T{\text-}\Proj$ and $\left(\begin{array}{c}X \\0 \\\end{array}\right)\cong Z^0(\widetilde{F^\bullet})$. It sufficies to show $\Hom_T(\widetilde{F^\bullet},\left(\begin{array}{c}P \\Q \\\end{array}\right)_{\phi})$ is acyclic for any $\left(\begin{array}{c}P \\Q \\\end{array}\right)_{\phi}\in T{\text-}\Proj$. Since $\Hom_T(\widetilde{F^\bullet},\left(\begin{array}{c}P \\Q \\\end{array}\right)_{\phi})\cong\Hom_R(F^\bullet,P)$, we will show $\Hom_R(F^\bullet,P)$ is acyclic.
As $\left(\begin{array}{c}P \\Q \\\end{array}\right)_{\phi}\in T{\text-}\Proj$, we have an exact sequence of left $R$-modules $0\to M\otimes_SQ\xrightarrow{\phi}P\to\Coker\phi\to0$. Then we get an exact sequence of complexes
$0\to\Hom_R(F^\bullet,M\otimes_SQ)\to\Hom_R(F^\bullet,P)\to\Hom_R(F^\bullet,\Coker\phi)\to0$. Since $\pd_RM<\infty$ and $Q\in S{\text-}\Proj$, we have $\pd_RM\otimes_SQ<\infty$. Besides, one has $\Coker\phi\in R{\text-}\Proj$. Thus both $\Hom_R(F^\bullet,M\otimes_SQ)$ and $\Hom_R(F^\bullet,\Coker\phi)$ are acyclic by Lemma \ref{lem:2.1}, and then $\Hom_R(F^\bullet,P)$ is acyclic. This completes the proof.
\end{proof}

{\bf Proof of Theorem \ref{thm:1.1}.}  For the ``if" part. Let $\phi:M\otimes_S Y\to X$ be an injective $R$-morphism. Then we get the following exact sequence of left $T$-modules
                                                                                      $$0\to\left(\begin{array}{c}M\otimes_SY \\Y \\\end{array}\right)\xrightarrow{\left(\begin{array}{c}\phi \\1_Y \\\end{array}\right)}\left(\begin{array}{c}X \\Y \\\end{array}\right)_\phi\to\left(\begin{array}{c}\Coker\phi  \\0 \\\end{array}\right)\to0.$$
Since $Y\in S{\text-}\GProj$ and $\Coker\phi\in R{\text-}\GProj$, it follows from Lemma \ref{lem:3.3} that $\left(
                                                                                        \begin{array}{c}
                                                                                          M\otimes_SY \\
                                                                                          Y \\
                                                                                        \end{array}
                                                                                      \right)$ and $\left(
                                                                                        \begin{array}{c}
                                                                                          \Coker\phi \\
                                                                                          0 \\
                                                                                        \end{array}
                                                                                      \right)$ are Gorenstein projective left $T$-modules.
Notice that $T{\text-}\GProj$ is closed under extensions (see \cite[Theorem 2.5]{Ho}), one has $\left(
                                                                                        \begin{array}{c}
                                                                                          X \\
                                                                                          Y \\
                                                                                        \end{array}
                                                                                      \right)_\phi\in T{\text-}\GProj$.

Conversely, assume $\left(
                                                                                        \begin{array}{c}
                                                                                          X \\
                                                                                          Y \\
                                                                                        \end{array}
                                                                                      \right)_\phi\in T{\text-}\GProj$. Then there exists a complete projective $T$-complex
$$L^\bullet=\cdots\to \left(\begin{array}{c}P_1^{-1} \\P_2^{-1} \\\end{array}\right)_{\phi^{-1}}\xrightarrow{\left(\begin{array}{c}d_1^{-1} \\d_2^{-1} \\\end{array}\right)}\left(\begin{array}{c}P_1^{0} \\P_2^{0} \\\end{array}\right)_{\phi^{0}}\xrightarrow{\left(\begin{array}{c}d_1^{0} \\d_2^{0} \\\end{array}\right)}\left(\begin{array}{c}P_1^{1} \\P_2^{1} \\\end{array}\right)_{\phi^{1}}\to\cdots$$ with $\left(
                                                                                        \begin{array}{c}
                                                                                          X \\
                                                                                          Y \\
                                                                                        \end{array}
                                                                                      \right)_\phi\cong Z^0(L^\bullet)$. Denote by $P_1^\bullet=\cdots\to P_1^{-1}\xrightarrow{d_1^{-1}}P_1^{0}\xrightarrow{d_1^{0}}P_1^{1}\to\cdots$ and $P_2^\bullet=\cdots\to P_2^{-1}\xrightarrow{d_2^{-1}}P_2^{0}\xrightarrow{d_2^{0}}P_2^{1}\to\cdots$. Then $P_1^\bullet$ is an acyclic complex of left $R$-modules with $X\cong Z^0(P_1^\bullet)$ and $P_2^\bullet$ is an acyclic complex of projective left $S$-modules with $Y\cong Z^0(P_2^\bullet)$. Now let $Q\in S{\text-}\Proj$, by Lemma \ref{lem:2.3}, $\pd_T\left(\begin{array}{c}0 \\Q \\\end{array}\right)<\infty$. It follows that $\Hom_S(P_2^\bullet,Q)\cong\Hom_T(L^\bullet,\left(\begin{array}{c}0 \\Q \\\end{array}\right))$ is acyclic. Then $P_2^\bullet$ is a complete projective $S$-complex and hence $Y\in S{\text-}\GProj$.

Next, we will check $\phi:M\otimes_SY\to X$ is injective. Since $\fd M_S<\infty$, $M\otimes_SP_2^\bullet$ is acyclic by Lemma \ref{lem:2.1}. It follows that $M\otimes_SY\cong Z^0(M\otimes_SP_2^\bullet)$ and then $\phi:M\otimes_SY\to X$ is the restriction of $\phi^0:M\otimes_SP_2^0\to P_1^0$. Notice that $\phi^0:M\otimes_SP_2^0\to P_1^0$ is injective, thus $\phi:M\otimes_SY\to X$ is injective.

Finally, we prove $\Coker\phi\in R{\text-}\GProj$. Denote by $\widetilde{P_1^\bullet}=\cdots\to\Coker\phi^{-1}\to\Coker\phi^{0}\to\Coker\phi^{1}\to\cdots$ with canonical differentials. Then we get the following exact sequence of acyclic $R$-complexes:
$$0\to M\otimes P_2^\bullet\to P_1^\bullet\to \widetilde{P_1^\bullet}\to 0.$$
Since $M\otimes P_2^\bullet$ and $P_1^\bullet$ are acyclic, one has $\widetilde{P_1^\bullet}$ is acyclic. It follows that $0\to Z^0(M\otimes P_2^\bullet)\to Z^0(P_1^\bullet)\to Z^0(\widetilde{P_1^\bullet})\to0$ is exact. Notice that $M\otimes_SY\cong Z^0(M\otimes_SP_2^\bullet)$ and $X\cong Z^0(P_1^\bullet)$, we conclude $\Coker\phi\cong Z^0(\widetilde{P_1^\bullet})$.
To complete this assertion, it suffices to show $\widetilde{P_1^\bullet}$ is complete projective.
Since each $\left(\begin{array}{c}P_1^{i} \\P_2^{i} \\\end{array}\right)_{\phi^{i}}\in T{\text-}\Proj$, it follows that each $\Coker\phi^i\in R{\text-}\Proj$. Thus $\widetilde{P_1^\bullet}$ is an acyclic complex of projective $R$-modules. For any $P\in R{\text-}\Proj$, one has $\left(
                                                                                                                                                                                                \begin{array}{c}
                                                                                                                                                                                                  P \\
                                                                                                                                                                                                  0 \\
                                                                                                                                                                                                \end{array}
                                                                                                                                                                                              \right)\in T{\text-}\Proj$. Then we get
$\Hom_R(\widetilde{P_1^\bullet},P)\cong\Hom_T(L^\bullet,\left(
                                                                                                                                                                                                \begin{array}{c}
                                                                                                                                                                                                  P \\
                                                                                                                                                                                                  0 \\
                                                                                                                                                                                                \end{array}
                                                                                                                                                                                              \right)
)$ is acyclic. Therefore, $\widetilde{P_1^\bullet}$ is complete projective.\qed

%

The following assertion could be checked directly.
\begin{prop}\label{prop:5.8}  Let $T=\left(
                                                                                 \begin{array}{cc}
                                                                                   R & M \\
                                                                                   0 & S \\
                                                                                 \end{array}
                                                                               \right)
$ be a triangular matrix ring with $\pd_RM<\infty$ and $\fd M_S<\infty$. Then
$\Gpd {_T}\left(
                               \begin{array}{c}
                                 X \\
                                 0 \\
                               \end{array}
                             \right)
=\Gpd {_R}X$, for any $X\in R{\text-}\Mod$.
\end{prop}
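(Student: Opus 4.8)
The plan is to prove the two inequalities $\Gpd_T\left(\begin{smallmatrix}X\\0\end{smallmatrix}\right)\leq\Gpd_R X$ and $\Gpd_R X\leq\Gpd_T\left(\begin{smallmatrix}X\\0\end{smallmatrix}\right)$ separately. Throughout I would use the characterization of Gorenstein projective $T$-modules from Theorem \ref{thm:1.1}, together with Lemma \ref{lem:3.3}(2), which already establishes the base case: if $X\in R\text{-}\GProj$ then $\left(\begin{smallmatrix}X\\0\end{smallmatrix}\right)\in T\text{-}\GProj$, giving the equality when both sides are zero. I may assume both dimensions are finite, since the inequalities I will exhibit each force finiteness of one side from the other; the case where $\Gpd_R X=\infty$ is handled symmetrically.

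For the inequality $\Gpd_T\left(\begin{smallmatrix}X\\0\end{smallmatrix}\right)\leq\Gpd_R X$, suppose $\Gpd_R X=n<\infty$ and take a Gorenstein projective resolution $0\to G_n\to\cdots\to G_0\to X\to 0$ of left $R$-modules. Applying the exact functor $(-)\mapsto\left(\begin{smallmatrix}-\\0\end{smallmatrix}\right)$ (exactness of short sequences in the bottom component is recorded in the preliminaries) yields an exact sequence $0\to\left(\begin{smallmatrix}G_n\\0\end{smallmatrix}\right)\to\cdots\to\left(\begin{smallmatrix}G_0\\0\end{smallmatrix}\right)\to\left(\begin{smallmatrix}X\\0\end{smallmatrix}\right)\to 0$ of left $T$-modules. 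By Lemma \ref{lem:3.3}(2) each $\left(\begin{smallmatrix}G_i\\0\end{smallmatrix}\right)$ is Gorenstein projective over $T$, so this is a Gorenstein projective resolution of length $n$, whence $\Gpd_T\left(\begin{smallmatrix}X\\0\end{smallmatrix}\right)\leq n$.

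For the reverse inequality, suppose $\Gpd_T\left(\begin{smallmatrix}X\\0\end{smallmatrix}\right)=n<\infty$ and choose a Gorenstein projective resolution $0\to\left(\begin{smallmatrix}A_n\\B_n\end{smallmatrix}\right)\to\cdots\to\left(\begin{smallmatrix}A_0\\B_0\end{smallmatrix}\right)\to\left(\begin{smallmatrix}X\\0\end{smallmatrix}\right)\to 0$ over $T$. Reading off the top and bottom components gives exact $R$- and $S$-sequences. By Theorem \ref{thm:1.1}, each $B_i\in S\text{-}\GProj$, and the bottom component is an exact complex $0\to B_n\to\cdots\to B_0\to 0$ of Gorenstein projective $S$-modules which is therefore a Gorenstein projective $S$-module in each cocycle (using that $S\text{-}\GProj$ is resolving, in particular closed under kernels of epimorphisms between its objects). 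The top sequence realizes $X$ with the data $\phi_i:M\otimes_S B_i\to A_i$ injective with $\Coker\phi_i\in R\text{-}\GProj$; splicing the induced short exact sequences $0\to M\otimes_S B_i\to A_i\to\Coker\phi_i\to 0$ lets me relate a Gorenstein projective resolution of $X$ in $R\text{-}\GProj$ to the given $T$-resolution. The main obstacle is bookkeeping the interaction between the two rows: I must argue that the $R$-modules $\Coker\phi_i$ assemble into an exact complex and that $M\otimes_S B_\bullet$ has Gorenstein projective (indeed finite projective, since $\pd_R M<\infty$ and each $B_i$ becomes syzygy-controlled) homology, so that $X$ inherits Gorenstein projective dimension at most $n$ from the horseshoe-type comparison. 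Once this compatibility is established, combining both inequalities yields the equality.
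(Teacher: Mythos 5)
The paper itself offers no argument for this proposition (it is dismissed with ``could be checked directly''), so there is no official proof to compare against; your attempt has to stand on its own. Your first inequality does: the functor $X\mapsto\left(\begin{smallmatrix}X\\0\end{smallmatrix}\right)$ is exact and carries $R{\text-}\GProj$ into $T{\text-}\GProj$ by Lemma \ref{lem:3.3}(2), so a length-$n$ Gorenstein projective $R$-resolution of $X$ pushes forward to one of $\left(\begin{smallmatrix}X\\0\end{smallmatrix}\right)$ over $T$. Your reduction of the equality to the two finite-dimension inequalities is also sound.

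The genuine gap is in the reverse inequality, precisely at the step you flag as ``the main obstacle''. What your argument needs is not that $M\otimes_SB_\bullet$ has Gorenstein projective or finite-projective-dimension homology, but that it has \emph{zero} homology, i.e.\ that $0\to M\otimes_SB_n\to\cdots\to M\otimes_SB_0\to0$ is exact: since the top row $0\to A_n\to\cdots\to A_0\to X\to0$ is exact, the homology long exact sequence of $0\to M\otimes_SB_\bullet\to A_\bullet\to\Coker\phi_\bullet\to0$ identifies each homology group of $\Coker\phi_\bullet$ with one of $M\otimes_SB_\bullet$ (shifted by one degree), so any nonzero homology of $M\otimes_SB_\bullet$ --- of whatever projective dimension --- would destroy the exactness of the cokernel complex and with it your proposed resolution of $X$. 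Moreover, the hypothesis you cite at this point, $\pd_RM<\infty$, is irrelevant to the exactness of $M\otimes_S-$; the hypothesis that does the work is $\fd M_S<\infty$. The repair is available from the paper's own toolkit: the cycles $Z_i$ of the exact complex $B_\bullet$ are Gorenstein projective (by the resolving property you already invoked), and $\Tor_j^S(M,G)=0$ for every $G\in S{\text-}\GProj$ and every $j\geq1$ whenever $\fd M_S<\infty$ --- take a complete projective $S$-complex $P^\bullet$ with $G\cong Z^0(P^\bullet)$; by Lemma \ref{lem:2.1}(2) the complex $M\otimes_SP^\bullet$ is acyclic, and dimension shifting along its cycles (exactly as used in the proof of Lemma \ref{lem:3.3}(1)) gives the vanishing. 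Applying this to the short exact sequences $0\to Z_i\to B_i\to Z_{i-1}\to0$ and splicing shows $M\otimes_SB_\bullet$ is exact; then $0\to\Coker\phi_n\to\cdots\to\Coker\phi_0\to X\to0$ is exact with all terms in $R{\text-}\GProj$ by Theorem \ref{thm:1.1}, whence $\Gpd_RX\leq n$. With that substitution in place of your homology claim, your proof closes.
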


Let $A$ be any ring, recall from \cite{ECT} that $A$ is {\it strongly left CM-free} if every left Gorenstein projective $A$-module is projective.

\begin{cor}\label{cor:5.9}(compare \cite[Theorem 4.1]{ECT}) Let $T=\left(
                                                                                 \begin{array}{cc}
                                                                                   R & M \\
                                                                                   0 & S \\
                                                                                 \end{array}
                                                                               \right)
$ be a triangular matrix ring with $\pd_RM<\infty$ and $\fd M_S<\infty$. Then $T$ is strongly left CM-free if and only if $R$ and $S$ are strongly left CM-free.
\end{cor}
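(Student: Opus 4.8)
The plan is to reduce everything to the module-theoretic characterizations already established: Theorem \ref{thm:1.1} for Gorenstein projective $T$-modules, Lemma \ref{lem:2.2} for projective $T$-modules, together with the embeddings in Lemma \ref{lem:3.3} and the dimension formula in Proposition \ref{prop:5.8}. No new construction is needed; the two characterizations run in parallel (Gorenstein projective versus projective, with ``Gorenstein projective cokernel'' replaced by ``projective cokernel''), and this parallelism does all the work.

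For the ``if'' direction, I would assume $R$ and $S$ are strongly left CM-free and take $\left(\begin{array}{c} X \\ Y \end{array}\right)_\phi \in T{\text-}\GProj$. By Theorem \ref{thm:1.1}, $Y \in S{\text-}\GProj$ and $\phi\colon M\otimes_S Y \to X$ is injective with $\Coker\phi \in R{\text-}\GProj$. The strong CM-freeness of $S$ forces $Y$ to be projective, and that of $R$ forces $\Coker\phi$ to be projective. These are exactly the hypotheses of Lemma \ref{lem:2.2}, so $\left(\begin{array}{c} X \\ Y \end{array}\right)_\phi$ is a projective $T$-module; hence $T$ is strongly left CM-free.

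For the ``only if'' direction, I would assume $T$ is strongly left CM-free. To see $R$ is strongly left CM-free, take $X \in R{\text-}\GProj$; by Proposition \ref{prop:5.8} (or directly by Lemma \ref{lem:3.3}(2)) the module $\left(\begin{array}{c} X \\ 0 \end{array}\right)$ is Gorenstein projective over $T$, hence projective by hypothesis, and Lemma \ref{lem:2.2} applied to this module (whose cokernel is $X$ itself) shows $X$ is projective. To see $S$ is strongly left CM-free, take $Y \in S{\text-}\GProj$; by Lemma \ref{lem:3.3}(1) the module $\left(\begin{array}{c} M\otimes_S Y \\ Y \end{array}\right)$ is Gorenstein projective over $T$, hence projective, and Lemma \ref{lem:2.2} then yields that $Y$ is projective over $S$.

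The argument is essentially a bookkeeping exercise, so there is no serious obstacle; the only point requiring a little care is to confirm that the standing hypotheses $\pd_R M < \infty$ and $\fd M_S < \infty$ are precisely those imposed in the statement, so that Theorem \ref{thm:1.1}, Lemma \ref{lem:3.3} and Proposition \ref{prop:5.8} are all simultaneously available.
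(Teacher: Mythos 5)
Your proposal is correct and follows essentially the same route as the paper's own proof: the ``if'' direction via Theorem \ref{thm:1.1} plus Lemma \ref{lem:2.2}, and the ``only if'' direction by testing on the modules $\left(\begin{array}{c} X \\ 0 \end{array}\right)$ and $\left(\begin{array}{c} M\otimes_S Y \\ Y \end{array}\right)$ from Lemma \ref{lem:3.3} and reading off projectivity via Lemma \ref{lem:2.2}. The only cosmetic difference is your optional appeal to Proposition \ref{prop:5.8}, which is not needed since Lemma \ref{lem:3.3}(2) already gives the required Gorenstein projectivity.
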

\begin{proof} For the ``if'' part, assume $R$ and $S$ be strongly left CM-free. Let $\left(\begin{array}{c}X \\Y \\\end{array}\right)_{\phi}\in T{\text-}\GProj$. It follows from Theorem \ref{thm:1.1} that $Y\in S{\text-}\GProj$ and $\phi:M\otimes_S Y\to X$ is an injective $R$-morphism with $\Coker\phi\in R{\text-}\GProj$. Since $R$ and $S$ are strongly left CM-free, $Y\in S{\text-}\Proj$ and $\Coker\phi\in R{\text-}\Proj$. By Lemma \ref{lem:2.2}, $\left(\begin{array}{c}X \\Y \\\end{array}\right)_{\phi}\in T{\text-}\Proj$ and then $T$ is strongly left CM-free.

Conversely, assume $T$ is strongly left CM-free. Let $X\in R{\text-}\GProj$ and $Y\in S{\text-}\GProj$. It follows from Lemma \ref{lem:3.3} that  $\left(
                                                                                        \begin{array}{c}
                                                                                          M\otimes_SY \\
                                                                                          Y \\
                                                                                        \end{array}
                                                                                      \right), \left(\begin{array}{c}
                                                                                          X \\
                                                                                          0 \\
                                                                                        \end{array}
                                                                                      \right)\in T{\text-}\GProj$ and then $\left(
                                                                                        \begin{array}{c}
                                                                                          M\otimes_SY \\
                                                                                          Y \\
                                                                                        \end{array}
                                                                                      \right), \left(\begin{array}{c}
                                                                                          X \\
                                                                                          0 \\
                                                                                        \end{array}
                                                                                      \right)\in T{\text-}\Proj$.                                                                                      By Lemma \ref{lem:2.2}, $X\in R{\text-}\Proj$ and $Y\in S{\text-}\Proj$. Hence $R$ and $S$ are strongly left CM-free.
\end{proof}

\section{\bf Recollements over triangular matrix rings}\label{singcat}

Let $R$ and $S$ be two rings, ${_R}M_S$ be an $R$-$S$-bimodule, and $T=\left(
                                                                                 \begin{array}{cc}
                                                                                   R & M \\
                                                                                   0 & S \\
                                                                                 \end{array}
                                                                               \right)
$ be the triangular matrix ring. It is shown in section \ref{Gdc} that, $\D^b( T{\text-}\Mod)_{fgp}$ is a thick subcategory of $\D^b( T{\text-}\Mod)$ and it plays an important role in the singularity category and Gorenstein defect category. In this section, we will study (left) recollements for $T$ corresponding to $\D^b( T{\text-}\Mod)_{fgp}$.
Parts of the work in this section are inspired by Lu \cite{L}.

To take $A=T$ and $e=e_S=\left(
                           \begin{array}{cc}
                             0 & 0 \\
                             0 & 1 \\
                           \end{array}
                         \right)$ as in Lemma \ref{lem:2.6}, we get the following. See also \cite{CL,L,ZP} for example.

\begin{lem}\label{lem:5.1}  Let $R$ and $S$ be two rings, ${_R}M_S$ an $R$-$S$-bimodule, and $T=\left(
                                                                                 \begin{array}{cc}
                                                                                   R & M \\
                                                                                   0 & S \\
                                                                                 \end{array}
                                                                               \right)
$ the triangular matrix ring.
\begin{enumerate}
\item[(1)] We have the following recollement of module categories:
$$\xymatrix{ R{\text-}\Mod\ar[r]^{i_*}& T{\text-}\Mod\ar@/^1pc/[l]^{i^!}\ar@/_1pc/[l]_{i^*}\ar[r]^{j^*} & S{\text-}\Mod\ar@/^1pc/[l]^{j_*}\ar@/_1pc/[l]_{j_!},}$$
where $i^*=R\otimes_T-$ is given by $\left(\begin{array}{c} X \\Y \\\end{array}\right)_{\phi}\mapsto\Coker\phi$; $i_*$ is given by $X\mapsto\left(\begin{array}{c} X \\0 \\\end{array}\right)$;
$i^!=\Hom_T(R,-)$ is given by $\left(\begin{array}{c} X \\Y \\\end{array}\right)_{\phi}\mapsto X$; $j_!=Te_S\otimes_{S}-$ is given by $Y\mapsto \left(\begin{array}{c} M\otimes_SY \\Y \\\end{array}\right)$;
$j^*=e_ST\otimes_T-$ is given by $\left(\begin{array}{c} X \\Y \\\end{array}\right)_{\phi}\mapsto Y$ and $j_*=\Hom_{S}(e_ST,-)$ is given by $Y\mapsto\left(\begin{array}{c} 0 \\Y \\\end{array}\right)$.
\item[(2)] If $\pd M_s<\infty$, we have the following recollement of bounded derived categories:
$$\xymatrix{\D^b( R{\text-}\Mod)\ar[r]^{\D^b(i_*)}&\D^b( T{\text-}\Mod)\ar@/^1pc/[l]^{\D^b(i^!)}\ar@/_1pc/[l]_{\mathbb{L}^b(i^*)}\ar[r]^{\D^b(j^*)} &\D^b( S{\text-}\Mod)\ar@/^1pc/[l]^{\D^b(j_*)}\ar@/_1pc/[l]_{\mathbb{L}^b(j_!)}, }\eqno{(1.1)}$$
where these six functors are the derived versions of those in (1).
\end{enumerate}
\end{lem}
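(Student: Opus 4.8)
The plan is to obtain both parts as instances of Lemma \ref{lem:2.6}, applied to $A=T$ with the idempotent $e=e_S$, so that essentially all the content lies in translating the abstract data $A/AeA$, $eAe$, $Ae$, $eA$ into the concrete modules of the triangular matrix ring and in matching the six abstract functors with the stated formulas.

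First, for part (1) I would record the routine computations $Te_ST=\left(\begin{smallmatrix}0 & M\\ 0 & S\end{smallmatrix}\right)$ and $e_STe_S\cong S$, which identify the two outer categories of Lemma \ref{lem:2.6}(1) as $T/Te_ST{\text-}\Mod\cong R{\text-}\Mod$ and $eAe{\text-}\Mod\cong S{\text-}\Mod$. I would then evaluate each functor on a triple $\left(\begin{smallmatrix}X\\ Y\end{smallmatrix}\right)_\phi$ using the bimodule identifications $Te_S\cong\left(\begin{smallmatrix}M\\ S\end{smallmatrix}\right)$ (as a $T$-$S$-bimodule) and $e_ST\cong S$ (as a left $S$-module): tensoring by $R=T/Te_ST$ annihilates the image of $\phi$, so $i^*=R\otimes_T-$ returns $\Coker\phi$; the functors $\Hom_T(R,-)$ and $e_ST\otimes_T-$ project onto the $R$- and $S$-components $X$ and $Y$; and $i_*$, $j_!=Te_S\otimes_S-$, $j_*=\Hom_S(e_ST,-)$ produce $\left(\begin{smallmatrix}X\\ 0\end{smallmatrix}\right)$, $\left(\begin{smallmatrix}M\otimes_SY\\ Y\end{smallmatrix}\right)$ and $\left(\begin{smallmatrix}0\\ Y\end{smallmatrix}\right)$, respectively. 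Each of these is a direct unwinding of a tensor or Hom over $T$, so the only care required is the bookkeeping of left/right module structures.

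For part (2) I would verify the four hypotheses (i)--(iv) of Lemma \ref{lem:2.6}(2). Since $_TR\cong\left(\begin{smallmatrix}R\\ 0\end{smallmatrix}\right)$ is a direct summand of $T$ and hence projective, condition (i) holds (the groups $\Ext_T^n(R,P)$ vanish for $n>0$) and the left half of (ii) is immediate. For the right half of (ii) I would invoke Lemma \ref{lem:2.5}(2), by which $R_T$ has finite projective dimension exactly because $\pd M_S<\infty$. Condition (iii) is trivial, as $e_ST\cong S$ is free over $S$ on the left. Condition (iv) follows from the splitting $(Te_S)_S\cong M_S\oplus S_S$ of right $S$-modules, giving $\pd(Te_S)_S=\pd M_S<\infty$. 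With (i)--(iv) confirmed, Lemma \ref{lem:2.6}(2) produces the recollement (1.1), its six functors being the derived versions of those in part (1).

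I expect the step needing most attention to be the functor identification in part (1) rather than any genuine obstacle: one must keep the left/right actions on $Te_S$ and $e_ST$ straight in order to see that $j_!$ and $j_*$ really yield $\left(\begin{smallmatrix}M\otimes_SY\\ Y\end{smallmatrix}\right)$ and $\left(\begin{smallmatrix}0\\ Y\end{smallmatrix}\right)$. Part (2) is a short hypothesis-check, and the standing assumption $\pd M_S<\infty$ is consumed only through Lemma \ref{lem:2.5}(2) for (ii) and the splitting $(Te_S)_S\cong M_S\oplus S_S$ for (iv).
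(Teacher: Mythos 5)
Your proposal is correct and follows exactly the paper's route: the paper proves Lemma \ref{lem:5.1} precisely by taking $A=T$ and $e=e_S$ in Lemma \ref{lem:2.6} (with the same identifications $T/Te_ST\cong R$, $e_STe_S\cong S$, $Te_S\cong\left(\begin{smallmatrix}M\\ S\end{smallmatrix}\right)$, $e_ST\cong S$), and your verification of hypotheses (i)--(iv) via the projectivity of $Te_R\cong{}_TR$, Lemma \ref{lem:2.5}(2), and the splitting $(Te_S)_S\cong M_S\oplus S_S$ supplies correctly the details the paper leaves implicit.
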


Restrict (1.1) to the bounded homotopy categories of projective modules, we get the following.

\begin{lem}\label{lem:5.2} Let $T=\left(
                                                                                 \begin{array}{cc}
                                                                                   R & M \\
                                                                                   0 & S \\
                                                                                 \end{array}
                                                                               \right)
$ be a triangular matrix ring with $\pd_RM<\infty$ and $\pd M_S<\infty$. Then (1.1) restricts the following recollement:
$$\xymatrix{\K^b( R{\text-}\Proj)\ar[r]^{\D^b(i_*)}&\K^b( T{\text-}\Proj)\ar@/^1pc/[l]^{\D^b(i^!)}\ar@/_1pc/[l]_{\mathbb{L}^b(i^*)}\ar[r]^{\D^b(j^*)} &\K^b( S{\text-}\Proj)\ar@/^1pc/[l]^{\D^b(j_*)}\ar@/_1pc/[l]_{\mathbb{L}^b(j_!)}}.\eqno{(1.2)}$$
Furthermore, (1.1) induces the following recollement:
$$\xymatrix{\D_{sg}( R{\text-}\Mod)\ar[r]&\D_{sg}( T{\text-}\Mod)\ar@/^1pc/[l]\ar@/_1pc/[l]\ar[r] &\D_{sg}( S{\text-}\Mod)\ar@/^1pc/[l]\ar@/_1pc/[l]}.$$
\end{lem}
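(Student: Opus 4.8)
The plan is to obtain (1.2) by showing that each of the six functors appearing in (1.1) preserves \emph{perfectness}, and then to obtain the singularity recollement by descending everything along the Verdier quotients $\D_{sg}(A{\text-}\Mod)=\D^b(A{\text-}\Mod)/\K^b(A{\text-}\Proj)$ for $A=R,T,S$. First I would record that, viewed inside $\D^b(A{\text-}\Mod)$, the (thick) subcategory $\K^b(A{\text-}\Proj)$ is precisely the subcategory of objects of finite projective dimension. Granting that each functor in (1.1) maps such a subcategory into the corresponding one, the axioms (R1)--(R4) for (1.2) are inherited automatically: adjunctions restrict to full subcategories that the adjoint pairs preserve, fully faithful functors stay fully faithful on full subcategories, $\D^b(j^*)\D^b(i_*)=0$ is immediate, and the two triangles of (R4) have all three vertices in $\K^b(T{\text-}\Proj)$, which is closed under cones. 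Thus the whole problem for (1.2) reduces to the preservation statement.

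For the preservation, the four exact functors $i_*,i^!,j^*,j_*$ and, on complexes of projectives, the left-derived functors $\mathbb{L}^b(i^*),\mathbb{L}^b(j_!)$ are all computed termwise, so it suffices to test them on a single projective module. By Lemma \ref{lem:2.3}(1), $i_*$ preserves finite projective dimension. By Lemma \ref{lem:2.2}, a projective $T$-module $\left(\begin{smallmatrix}P\\Q\end{smallmatrix}\right)_{\psi}$ has $Q\in S{\text-}\Proj$ and $\Coker\psi\in R{\text-}\Proj$; hence $j^*$ sends $T$-projectives to $S$-projectives, $\mathbb{L}^b(i^*)$ sends them (via $\Coker$) to $R$-projectives, and $\mathbb{L}^b(j_!)$ sends a projective $S$-module $Q$ to the projective $T$-module $\left(\begin{smallmatrix}M\otimes_S Q\\Q\end{smallmatrix}\right)$. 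The remaining two functors are exactly where $\pd_R M<\infty$ enters: from $0\to M\otimes_S Q\to P\to\Coker\psi\to0$ together with $\pd_R(M\otimes_S Q)\le\pd_R M$ one gets $\pd_R\big(i^!\left(\begin{smallmatrix}P\\Q\end{smallmatrix}\right)\big)=\pd_R P\le\pd_R M$, while $j_*Q=\left(\begin{smallmatrix}0\\Q\end{smallmatrix}\right)$ satisfies $\pd_T\left(\begin{smallmatrix}0\\Q\end{smallmatrix}\right)\le\pd_R M+1$ by the sequence used in Lemma \ref{lem:2.3}(2). As these bounds are uniform in $Q$, applying $i^!$ or $j_*$ termwise to a bounded complex of projectives yields a bounded complex of modules of uniformly bounded finite projective dimension, i.e.\ a perfect complex. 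This proves (1.2).

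For the induced recollement of singularity categories I would pass to the Verdier quotients by the perfect subcategories. Since every functor in (1.1) carries $\K^b(\cdot{\text-}\Proj)$ into $\K^b(\cdot{\text-}\Proj)$, each descends, by the universal property of localization, to a triangle functor between the singularity categories. The recollement data then transport: the units and counits of the four adjunctions are natural transformations between subcategory-preserving functors, so they descend and give the four adjoint pairs on the quotients; the quotient functors are triangle functors, so the two triangles of (R4) descend; and $j^*i_*=0$ passes to the quotient. The only delicate axiom is (R2), since full faithfulness is not preserved by Verdier localization in general. Here I would use that, inside a recollement, full faithfulness of $i_*$, $j_!$ and $j_*$ is equivalent to invertibility of the (co)units $i^*i_*\to\id$, $\id\to j^*j_!$ and $j^*j_*\to\id$; being natural isomorphisms, these descend to the corresponding composites of the induced functors, forcing $\overline{i_*}$, $\overline{j_!}$ and $\overline{j_*}$ to be fully faithful. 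This produces the recollement of $\D_{sg}(R{\text-}\Mod)$, $\D_{sg}(T{\text-}\Mod)$ and $\D_{sg}(S{\text-}\Mod)$.

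I expect the main obstacle to be this last descent to the Verdier quotients --- specifically the verification of (R2), since localization typically destroys full faithfulness; recasting full faithfulness as invertibility of the recollement (co)units is the device that makes it survive the quotient. By contrast, the preservation computations of the second paragraph are routine consequences of Lemmas \ref{lem:2.2} and \ref{lem:2.3}, the one genuinely essential hypothesis being $\pd_R M<\infty$, which uniformly bounds the projective dimensions produced by $i^!$ and $j_*$.
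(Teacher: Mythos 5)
Your proof is correct, but note that the paper itself gives no argument for this lemma: its entire proof is the citation ``See \cite[Theorem 3.2]{LL}''. So your proposal is a self-contained reconstruction of the cited result, and it decomposes exactly the way that result does. Your preservation computations are the ring-theoretic content and are all right: Lemma \ref{lem:2.2} handles $\D^b(j^*)$, $\mathbb{L}^b(i^*)$ and $\mathbb{L}^b(j_!)$, Lemma \ref{lem:2.3}(1) handles $\D^b(i_*)$, the splitting of $0\to M\otimes_S Q\to P\to\Coker\psi\to 0$ gives $\pd_R P\le\pd_R M$ for $i^!$, and the sequence from Lemma \ref{lem:2.3}(2) gives $\pd_T(j_*Q)\le\pd_R M+1$ --- correctly isolating $\pd_R M<\infty$ as the hypothesis that only these last two functors need (the uniformity of the bounds is actually superfluous, since the complexes are bounded; the closing step ``bounded complex whose terms have finite projective dimension is perfect'' is the projective analogue of Proposition \ref{prop:A.4} and is proved by the same brutal-truncation induction). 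Your third paragraph --- descending a recollement along Verdier quotients by thick subcategories preserved by all six functors, with (R2) surviving because full faithfulness of $i_*$, $j_!$, $j_*$ is equivalent to invertibility of the relevant (co)units, which are natural isomorphisms and hence descend --- is precisely the general principle \cite[Proposition 2.5]{LL}, which this paper invokes separately (together with \cite[Lemma 2.3]{L}) in the proofs of Corollaries \ref{cor:5.3} and \ref{cor:5.4}. What the citation buys the paper is brevity; what your proof buys is transparency about exactly where each hypothesis enters, and it is the argument one finds by unwinding \cite{LL}.
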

\begin{proof} See \cite[Theorem 3.2]{LL}.
\end{proof}

{\bf Proof of Theorem \ref{thm:1.3} (1).} We first claim $\D^b(i_*)(\D^b( R{\text-}\Mod)_{fgp})\subseteq\D^b(T{\text-}\Mod )_{fgp}$. In fact, let $G\in R{\text-}\GProj$. It follows from Lemma \ref{lem:5.1} that $i_*(G)\cong\left(\begin{array}{c} G\\0 \\\end{array}\right)$ and then $i_*(G)\in T{\text-}\GProj$ by Lemma \ref{lem:3.3}. Now let $X^\bullet\in\D^b( R{\text-}\Mod)_{fgp}$, it follows that $X^\bullet$ is isomorphic to some bounded complex $G^\bullet$ consistig of Gorenstein projective $R$-modules. Then $\D^b(i_*)(X^\bullet)\cong\D^b(i_*)(G^\bullet)\cong i_*(G^\bullet)$ is a bounded complex consistig of Gorenstein projective $T$-modules. Hence $\D^b(i_*)(\D^b( R{\text-}\Mod)_{fgp})\subseteq\D^b( T{\text-}\Mod)_{fgp}$.

Next we show $\mathbb{L}^b(i^*)(\D^b( T{\text-}\Mod)_{fgp})=\D^b( R{\text-}\Mod)_{fgp}$. Since $i^*(\left(\begin{array}{c} X \\Y \\\end{array}\right)_{\phi})\cong\Coker\phi$, one has $i^*$ preserves Gorenstein projective modules by Theorem \ref{thm:1.1}.
 Now let $X^\bullet\in\D^b( T{\text-}\Mod)_{fgp}$, it follows from Lemma \ref{lem:A.2} that there exists a quasi-isomorphism $P^\bullet\to X^\bullet$ with $P^\bullet\in\K^{-,b}(T{\text-}\Proj)$ such that $Z^{i}(P^\bullet)\in T{\text-}\GProj$ for $i\ll0$. Denote by $s$ the index such that $H^i(P^\bullet)=0$ and $Z^{i}(P^\bullet)\in T{\text-}\GProj$ for any $i\leq s$.
 It follows that $\cdots\to P^{s-2}\to P^{s-1}\to Z^s(P^\bullet)\to0$ is a projective resolution of $Z^s(P^\bullet)$. Notice that $Z^{s}(P^\bullet)\in T{\text-}\GProj$, we get the following acyclic $T$-complex with each degree projective $$P'^\bullet=\cdots\to P^{s-2}\to P^{s-1}\to Q^s\to Q^{s+1}\to\cdots.$$
Since $\pd M_S<\infty$, by Lemma \ref{lem:2.5} $\pd R_T<\infty$. It follows from Lemma \ref{lem:2.1} that $i^*(P'^\bullet)\cong R\otimes_TP'^\bullet$ is acyclic, and then $\cdots\to R\otimes_TP^{s-2}\to R\otimes_TP^{s-1}\to R\otimes_TZ^s(P^\bullet)\to0$ is acyclic. Therefore, $i^*(P^\bullet)$ and hence $\mathbb{L}^b(i^*)(X^\bullet)$ are isomorphic to the complex $0\to R\otimes_TZ^s(P^\bullet)\to R\otimes_T P^s\to R\otimes_T P^{s+1}\to\cdots$, which is a bounded complex with each degree Gorenstein projective.
Hence $\mathbb{L}^b(i^*)(X^\bullet)\in\D^b( R{\text-}\Mod)_{fgp}$ and therefore $\mathbb{L}^b(i^*)(\D^b( T{\text-}\Mod)_{fgp})\subseteq\D^b( R{\text-}\Mod)_{fgp}$.
On the other hand, let $Y^\bullet\in\D^b( R{\text-}\Mod)_{fgp}$, then $Y^\bullet\cong \mathbb{L}^b(i^*)(\D^b(i_*)(Y^\bullet))$. By the above claim, $\D^b(i_*)(Y^\bullet)\in\D^b( T{\text-}\Mod)_{fgp}$. Then $Y^\bullet\in \mathbb{L}^b(i^*)(\D^b( T{\text-}\Mod)_{fgp})$ and hence $\D^b( R{\text-}\Mod)_{fgp}\subseteq \mathbb{L}^b(i^*)(\D^b( T{\text-}\Mod)_{fgp})$. Therefore $\mathbb{L}^b(i^*)(\D^b( T{\text-}\Mod)_{fgp})$ $=\D^b( R{\text-}\Mod)_{fgp}$.

Now note that both $j_!$ and $j^*$ preserve Gorenstein projective modules. Actually, by the similar arguments as above, one gets $\mathbb{L}^b(j_!)(\D^b( S{\text-}\Mod)_{fgp})\subseteq\D^b( T{\text-}\Mod)_{fgp}$ and $\D^b(j^*)(\D^b( T{\text-}\Mod)_{fgp})=\D^b( S{\text-}\Mod)_{fgp}$. Therefore, (1.1) restricts to the following left recollement:
$$\xymatrix{\D^b( R{\text-}\Mod)_{fgp}\ar[r]^{\D^b(i_*)}&\D^b( T{\text-}\Mod)_{fgp}\ar@/_1pc/[l]_{\mathbb{L}^b(i^*)}\ar[r]^{\D^b(j^*)} &\D^b( S{\text-}\Mod)_{fgp}\ar@/_1pc/[l]_{\mathbb{L}^b(j_!)}}.\eqno{(1.3)}$$
\qed

\begin{cor}\label{cor:5.3} (compare \cite[Proposition 3.6]{L} and \cite[Theorem 3.3]{ZP})  Let $T=\left(
                                                                                 \begin{array}{cc}
                                                                                   R & M \\
                                                                                   0 & S \\
                                                                                 \end{array}
                                                                               \right)
$ be a triangular matrix ring with $\pd_RM<\infty$ and $\pd M_S<\infty$. Then we have the following left recollement of stable categories:
$$\xymatrix{\underline{R{\text-}\GProj}\ar[r]&\underline{T{\text-}\GProj}\ar@/_1pc/[l]\ar[r] &\underline{S{\text-}\GProj}\ar@/_1pc/[l]},$$
and the following left recollement of Gorenstein defect categories:
$$\xymatrix{\D_{def}( R{\text-}\Mod)\ar[r]&\D_{def}( T{\text-}\Mod)\ar@/_1pc/[l]\ar[r] &\D_{def}( S{\text-}\Mod)\ar@/_1pc/[l]}.$$
\end{cor}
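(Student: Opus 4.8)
The plan is to derive both left recollements from Theorem~\ref{thm:1.3}(1) by passing to suitable Verdier quotients, exploiting the two triangle-equivalences recorded in Theorem~\ref{thm:A.5}: for every ring $A$ one has $\underline{A{\text-}\GProj}\simeq\D^b(A{\text-}\Mod)_{fgp}/\K^b(A{\text-}\Proj)$ and $\D_{def}(A{\text-}\Mod)\simeq\D^b(A{\text-}\Mod)/\D^b(A{\text-}\Mod)_{fgp}$. Applying these to each of $R$, $T$ and $S$ converts the two assertions into statements about quotients of the categories occurring in the recollements (1.1), (1.2) and (1.3), so the task reduces to inducing a left recollement on Verdier quotients from a compatible pair of (left) recollements.

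For the stable-category assertion I would regard the homotopy-category recollement (1.2) of Lemma~\ref{lem:5.2} and the left recollement (1.3) of Theorem~\ref{thm:1.3}(1) as a commutative diagram: for each $A\in\{R,T,S\}$ the thick subcategory $\K^b(A{\text-}\Proj)$ is contained in $\D^b(A{\text-}\Mod)_{fgp}$, since projectives are Gorenstein projective, and the four functors $\D^b(i_*)$, $\mathbb{L}^b(i^*)$, $\D^b(j^*)$, $\mathbb{L}^b(j_!)$ of (1.3) restrict exactly to the corresponding functors of (1.2). Invoking the standard principle that a left recollement equipped with a compatible family of thick subcategories forming a left recollement induces a left recollement on the three Verdier quotients, one obtains a left recollement of the quotients $\D^b(A{\text-}\Mod)_{fgp}/\K^b(A{\text-}\Proj)$, which the equivalences of Theorem~\ref{thm:A.5} identify with the desired left recollement of $\underline{R{\text-}\GProj}$, $\underline{T{\text-}\GProj}$ and $\underline{S{\text-}\GProj}$. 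Only the left-recollement part survives because (1.3) is itself merely a left recollement.

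For the Gorenstein-defect-category assertion I would run the same mechanism one level up, taking the full recollement (1.1) of Lemma~\ref{lem:5.1}(2) on top and the family of thick subcategories $\D^b(A{\text-}\Mod)_{fgp}\subseteq\D^b(A{\text-}\Mod)$ underneath. Theorem~\ref{thm:1.3}(1) says precisely that $\D^b(i_*)$, $\mathbb{L}^b(i^*)$, $\D^b(j^*)$ and $\mathbb{L}^b(j_!)$ carry $\D^b(A{\text-}\Mod)_{fgp}$ into $\D^b(A{\text-}\Mod)_{fgp}$, whereas the right adjoints $\D^b(i^!)$ and $\D^b(j_*)$ need not (this is exactly the content of the extra hypothesis in Theorem~\ref{thm:1.3}(2)); hence only the left-recollement structure is compatible and descends. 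Passing to the quotients $\D^b(A{\text-}\Mod)/\D^b(A{\text-}\Mod)_{fgp}$ and using $\D_{def}(A{\text-}\Mod)\simeq\D^b(A{\text-}\Mod)/\D^b(A{\text-}\Mod)_{fgp}$ then yields the left recollement of $\D_{def}(R{\text-}\Mod)$, $\D_{def}(T{\text-}\Mod)$ and $\D_{def}(S{\text-}\Mod)$.

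The main obstacle is to confirm that the induced functors on the Verdier quotients genuinely assemble into a left recollement rather than just a diagram of triangle-functors. Concretely, one must verify that the adjunctions $(\mathbb{L}^b(i^*),\D^b(i_*))$ and $(\mathbb{L}^b(j_!),\D^b(j^*))$ descend, that $\D^b(i_*)$ remains fully faithful after localization, and that the unit and counit triangles of axiom (R4) survive the quotient. This rests on the appropriate preimage compatibilities of the thick subcategories: in the stable case, that an object of $\D^b(T{\text-}\Mod)_{fgp}$ lies in $\K^b(T{\text-}\Proj)$ exactly when its images under $\D^b(i_*)$ and $\D^b(j^*)$ do, and the analogous statement for $\D^b(A{\text-}\Mod)_{fgp}\subseteq\D^b(A{\text-}\Mod)$ in the defect case. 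Each of these should follow from the explicit descriptions of the six functors in Lemma~\ref{lem:5.1}(1), together with the Gorenstein-projective computations already established in Theorem~\ref{thm:1.1}, Lemma~\ref{lem:3.3} and the proof of Theorem~\ref{thm:1.3}(1).
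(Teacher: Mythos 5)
Your proposal is correct and takes essentially the same route as the paper: the paper likewise obtains the stable-category statement by combining recollements (1.2) and (1.3), and the defect-category statement by combining (1.1) and (1.3), passing to Verdier quotients and then identifying those quotients via Theorem \ref{thm:A.5}. The ``standard principle'' you invoke for descending a compatible pair of (left) recollements to the quotients is exactly what the paper cites as \cite[Lemma 2.3]{L}, so the verification you flag as the main obstacle is outsourced there rather than argued from the explicit functor descriptions.
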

\begin{proof} Combine recollements (1.2) and (1.3) with \cite[Lemma 2.3]{L}, we have the following left recollement:

$$\xymatrix{\D^b( R{\text-}\Mod)_{fgp}/\K^b( R{\text-}\Proj)\ar[r]&\D^b( T{\text-}\Mod)_{fgp}/\K^b( T{\text-}\Proj)\ar@/_1pc/[l]\ar[r] &\D^b( S{\text-}\Mod)_{fgp}/\K^b( S{\text-}\Proj)\ar@/_1pc/[l]}.$$
By Theorem \ref{thm:A.5}, we obtain the following recollement:
$$\xymatrix{\underline{R{\text-}\GProj}\ar[r]&\underline{T{\text-}\GProj}\ar@/_1pc/[l]\ar[r] &\underline{S{\text-}\GProj}\ar@/_1pc/[l]}.$$
Similarly, one could get the following left recollement
$$\xymatrix{\D_{def}( R{\text-}\Mod)\ar[r]&\D_{def}( T{\text-}\Mod)\ar@/_1pc/[l]\ar[r] &\D_{def}( S{\text-}\Mod)\ar@/_1pc/[l]}$$
by recollements (1.1) and (1.3), \cite[Lemma 2.3]{L} and Theorem \ref{thm:A.5}.
\end{proof}

{\bf Proof of Theorem \ref{thm:1.3} (2).} It remains to show $\D^b(j_*)(\D^b( S{\text-}\Mod)_{fgp})\subseteq\D^b( T{\text-}\Mod)_{fgp}$ and $\D^b(i^!)(\D^b( T{\text-}\Mod)_{fgp})\subseteq\D^b( R{\text-}\Mod)_{fgp}$.
Let $G\in S{\text-}\GProj$, we get $j_*(G)=\left(
                                                                                                            \begin{array}{c}
                                                                                                              0 \\
                                                                                                              G \\
                                                                                                            \end{array}
                                                                                                          \right)$.
Now consider the following exact sequence of left $T$-modules
$$0\to\left(
        \begin{array}{c}
          M\otimes_S G \\
          0 \\
        \end{array}
      \right)\to\left(
                  \begin{array}{c}
                    M\otimes_S G \\
                    G \\
                  \end{array}
                \right)\to\left(
                                 \begin{array}{c}
                                   0 \\
                                   G \\
                                 \end{array}
                               \right)\to0.$$
Since $G\in S{\text-}\GProj$, $\left(
                  \begin{array}{c}
                    M\otimes_S G \\
                    G \\
                  \end{array}
                \right)\in T{\text-}\GProj$ by Lemma \ref{lem:3.3}. It follows that $\Gpd_T\left(
                                 \begin{array}{c}
                                   0 \\
                                   G \\
                                 \end{array}
                               \right)<\infty$ if and only if $\Gpd_T\left(
        \begin{array}{c}
          M\otimes_S G \\
          0 \\
        \end{array}
      \right)<\infty$.
By assumption, $\Gpd {_R}M\otimes_S G<\infty$. It follows from Proposition \ref{prop:5.8} that $\Gpd_T\left(
        \begin{array}{c}
          M\otimes_S G \\
          0 \\
        \end{array}
      \right)<\infty$.
Then $j_*(G)$ is of finite Gorenstein projective dimension as a left $T$-module, for any $G\in S{\text-}\GProj$.
Now let $X^\bullet\in\D^b( S{\text-}\Mod)_{fgp}$, then $X^\bullet\cong G^\bullet$
for some $G^\bullet\in\C^b( S{\text-}\GProj)$. It follows that $\D^b(j_*)(X^\bullet)\cong j_*(G^\bullet)$ is a bounded complex with each degree being of finite Gorenstein projective dimension. It follows from Lemma \ref{prop:A.4} that $\D^b(j_*)(X^\bullet)\in\D^b( T{\text-}\Mod)_{fgp}$ and hence $\D^b(j_*)(\D^b( S{\text-}\Mod)_{fgp})\subseteq\D^b( T{\text-}\Mod)_{fgp}$.

On the other hand, let $X^\bullet\in\D^b( T{\text-}\Mod)_{fgp}$. Then we have the following triangle in $\D^b( T{\text-}\Mod)$
$$\D^b(i_*)\D^b(i^!)X^\bullet\to X^\bullet\to \D^b(j_*)\D^b(j^*)X^\bullet\to \D^b(i_*)\D^b(i^!)X^\bullet[1].$$
Since $\D^b(j_*)\D^b(j^*)X^\bullet\in\D^b( T{\text-}\Mod)_{fgp}$ and $\D^b( T{\text-}\Mod)_{fgp}$ is a thick subcategory of $\D^b( T{\text-}\Mod)$, $\D^b(i_*)\D^b(i^!)X^\bullet\in\D^b( T{\text-}\Mod)_{fgp}$. One has
$\D^b(i^!)X^\bullet\cong \mathbb{L}^b(i^*)(\D^b(i_*)\D^b(i^!)X^\bullet)$, and then $\D^b(i^!)X^\bullet\in\D^b( R{\text-}\Mod)_{fgp}$ by the proof of Theorem \ref{thm:1.3} (1).  Therefore, $\D^b(i^!)(\D^b( T{\text-}\Mod)_{fgp})\subseteq\D^b( R{\text-}\Mod)_{fgp}$.
\qed

\begin{cor}\label{cor:5.4} Let $T=\left(
                                                                                 \begin{array}{cc}
                                                                                   R & M \\
                                                                                   0 & S \\
                                                                                 \end{array}
                                                                               \right)
$ be a triangular matrix ring with $\pd_RM<\infty$ and $\pd M_S<\infty$. If $\Gpd {_R}M\otimes_S G<\infty$ for any $G\in S{\text-}\GProj$. Then we have the following recollement of stable categories:

$$\xymatrix{\underline{R{\text-}\GProj}\ar[r]&\underline{T{\text-}\GProj}\ar@/^1pc/[l]\ar@/_1pc/[l]\ar[r] &\underline{S{\text-}\GProj}\ar@/^1pc/[l]\ar@/_1pc/[l] },$$
\noindent and the following recollement of Gorenstein defect categories:

$$\xymatrix{\D_{def}( R{\text-}\Mod)\ar[r]&\D_{def}( T{\text-}\Mod)\ar@/^1pc/[l]\ar@/_1pc/[l]\ar[r] &\D_{def}( S{\text-}\Mod)\ar@/^1pc/[l]\ar@/_1pc/[l] }.$$
\end{cor}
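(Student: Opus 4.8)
The plan is to mirror the proof of Corollary \ref{cor:5.3}, replacing the left recollement (1.3) by the full recollement (1.4) supplied by Theorem \ref{thm:1.3} (2); the extra hypothesis $\Gpd {_R}M\otimes_S G<\infty$ for $G\in S{\text-}\GProj$ is exactly what upgrades the conclusion from a left recollement to a genuine recollement. Throughout, the two bridges are \cite[Lemma 2.3]{L}, which transports a recollement to a recollement of Verdier quotients by a compatible triple of thick subcategories, and Theorem \ref{thm:A.5}, which identifies the relevant quotients as $\underline{A{\text-}\GProj}$ and $\D_{def}(A{\text-}\Mod)$.

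For the recollement of stable categories, I would first observe that $\K^b(A{\text-}\Proj)\subseteq\D^b(A{\text-}\Mod)_{fgp}$ for $A\in\{R,S,T\}$, since projective modules are Gorenstein projective. I would then combine the full recollement (1.2) of the homotopy categories $\K^b(-{\text-}\Proj)$ (Lemma \ref{lem:5.2}) with the full recollement (1.4) of $\D^b(-{\text-}\Mod)_{fgp}$ and apply \cite[Lemma 2.3]{L}, yielding a recollement of the Verdier quotients $\D^b(-{\text-}\Mod)_{fgp}/\K^b(-{\text-}\Proj)$. Finally, Theorem \ref{thm:A.5} furnishes triangle equivalences $\D^b(A{\text-}\Mod)_{fgp}/\K^b(A{\text-}\Proj)\simeq\underline{A{\text-}\GProj}$ for each of $A=R,S,T$, translating this into the asserted recollement of $\underline{R{\text-}\GProj}$, $\underline{T{\text-}\GProj}$ and $\underline{S{\text-}\GProj}$.

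For the recollement of Gorenstein defect categories, I would instead pair the ambient recollement (1.1) of the bounded derived categories (Lemma \ref{lem:5.1} (2)) with the full recollement (1.4), now viewing $\D^b(-{\text-}\Mod)_{fgp}$ as the compatible triple of thick subcategories sitting inside $\D^b(-{\text-}\Mod)$. Another application of \cite[Lemma 2.3]{L} produces a recollement of the quotients $\D^b(-{\text-}\Mod)/\D^b(-{\text-}\Mod)_{fgp}$, and Theorem \ref{thm:A.5} identifies each such quotient with the Gorenstein defect category $\D_{def}(-{\text-}\Mod)$, giving the second recollement.

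The main obstacle is verifying the hypotheses of \cite[Lemma 2.3]{L}: one must check that every one of the six functors in the ambient recollement carries the chosen thick subcategory at each corner into the corresponding subcategory at its source and target. For the left-hand functors $\mathbb{L}^b(i^*)$, $\D^b(i_*)$, $\mathbb{L}^b(j_!)$ and $\D^b(j^*)$ this is already established in the proof of Theorem \ref{thm:1.3} (1); the content of Theorem \ref{thm:1.3} (2) is precisely that, under the assumption $\Gpd {_R}M\otimes_S G<\infty$, the remaining adjoints $\D^b(i^!)$ and $\D^b(j_*)$ also respect $\D^b(-{\text-}\Mod)_{fgp}$. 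Since all six compatibilities hold, \cite[Lemma 2.3]{L} applies in full and both recollements descend as claimed.
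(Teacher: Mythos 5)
Your proposal is correct and follows the paper's proof essentially step for step: combine (1.2) with (1.4) (resp.\ (1.1) with (1.4)), descend to a recollement of the Verdier quotients, and identify those quotients with $\underline{A{\text-}\GProj}$ (resp.\ $\D_{def}(A{\text-}\Mod)$) via Theorem \ref{thm:A.5}, with Theorem \ref{thm:1.3} (2) supplying exactly the two extra functor compatibilities ($\D^b(i^!)$ and $\D^b(j_*)$) needed beyond the left-recollement case. The one small discrepancy is the descent lemma you cite: since (1.4) is a full recollement, the paper invokes \cite[Proposition 2.5]{LL} here, reserving \cite[Lemma 2.3]{L} for the left-recollement situation of Corollary \ref{cor:5.3}, so you should check that the version of the descent lemma you quote indeed covers all six functors rather than only the upper four.
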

\begin{proof} Combine the recollements (1.2) and (1.4) with \cite[Proposition 2.5]{LL}, we have the following recollement:
$$\xymatrix{\D^b( R{\text-}\Mod)_{fgp}/\K^b( R{\text-}\Proj)\ar[r]&\D^b( T{\text-}\Mod)_{fgp}/\K^b( T{\text-}\Proj)\ar@/^1pc/[l]\ar@/_1pc/[l]\ar[r] &\D^b( S{\text-}\Mod)_{fgp}/\K^b( S{\text-}\Proj)\ar@/^1pc/[l]\ar@/_1pc/[l] }.$$
Then by Theorem \ref{thm:A.5} we get the following recollement: $$\xymatrix{\underline{R{\text-}\GProj}\ar[r]&\underline{T{\text-}\GProj}\ar@/^1pc/[l]\ar@/_1pc/[l]\ar[r] &\underline{S{\text-}\GProj}\ar@/^1pc/[l]\ar@/_1pc/[l] } .$$
Similarly, one could get the following recollement
$$\xymatrix{\D_{def}( R{\text-}\Mod)\ar[r]&\D_{def}( T{\text-}\Mod)\ar@/^1pc/[l]\ar@/_1pc/[l]\ar[r] &\D_{def}( S{\text-}\Mod)\ar@/^1pc/[l]\ar@/_1pc/[l]}$$
 by recollements (1.1) and (1.4), \cite[Proposition 2.5]{LL} and Theorem \ref{thm:A.5}.
\end{proof}

 Let $T=\left(
                                                                                 \begin{array}{cc}
                                                                                   R & M \\
                                                                                   0 & S \\
                                                                                 \end{array}
                                                                               \right)
$ be a triangular matrix ring with $\pd_RM<\infty$ and $\pd M_S<\infty$. In the following , we will list some examples to indicate that the condition ``$\Gpd {_R}M\otimes_S G<\infty$ for any $G\in S{\text-}\GProj$'' in Theorem \ref{thm:1.3} arises naturally in representation theory.

\begin{exa}\label{exa:5.5} {\rm\begin{enumerate}
\item[(1)] Let $R$ be left Gorenstein regular in the sense of \cite{ECT,EEG}. Then it follows from \cite[Theorem 2.28]{EEG} that every left $R$-module has finite Gorenstein projective dimension and then $\Gpd {_R}M\otimes_S G<\infty$ for any $G\in S{\text-}\GProj$. In this case, we get a triangle-equivalence $\D_{def}( T{\text-}\Mod)\simeq\D_{def}( S{\text-}\Mod)$ by Corollary \ref{cor:A.6}.
\item[(2)] Let $S$ be strongly left CM-free. Then every Gorenstein projective $S$-module is projective and then $\pd {_R}M\otimes_S G<\infty$ for any $G\in S{\text-}\GProj$. In this case, we get a triangle-equivalence $\underline{R{\text-}\GProj}\simeq\underline{T{\text-}\GProj}$.
\item[(3)] (compare \cite[Theorem 3.12]{L}) Let $M\in R{\text-}\Proj$ and $\Hom_R(M,P)\in( S{\text-}\GProj)^\bot$ for any $P\in R{\text-}\Proj$. Then $M\otimes_S G\in R{\text-}\GProj$ for any $G\in S{\text-}\GProj$.
Indeed, let $G\in S{\text-}\GProj$. Then there exist a complete projective $S$-complex $T^\bullet$ such that $G\cong Z^0(T^\bullet)$. Notice that $_RM$ is projective and $\pd M_S<\infty$, $M\otimes_S T^\bullet$ is an acyclic complex of projective $R$-modules with $M\otimes_SG\cong Z^0(M\otimes_ST^\bullet)$. For any $P\in R{\text-}\Proj$, one has $\Hom_R(M\otimes_S T^\bullet,P)\cong\Hom_S(T^\bullet,\Hom_R(M,P))$ is acyclic. Thus $M\otimes_S G\in R{\text-}\GProj$.
\item[(4)] (compare \cite[Corollary 3.13]{L}) Let $M\in R{\text-}\Proj$ and $M\otimes_S G\in{^\bot}R{\text-}\Proj$ for any $G\in S{\text-}\GProj$. Then $M\otimes_S G\in R{\text-}\GProj$ for any $G\in S{\text-}\GProj$. In fact, let $G\in S{\text-}\GProj$. Then there exist a complete projective $S$-complex $T^\bullet$ such that $G\cong Z^0(T^\bullet)$. By assumption, $M\otimes_S T^\bullet$ is an acyclic complex of projective $R$-modules with $M\otimes_SG\cong Z^0(M\otimes_ST^\bullet)$. Since $M\otimes_S G\in{^\bot}R{\text-}\Proj$ for any $G\in S{\text-}\GProj$, $M\otimes_S T^\bullet$ is complete projective and then $M\otimes_S G\in R{\text-}\GProj$.
\end{enumerate}}
\end{exa}

\appendix
 \renewcommand{\appendixname}{Appendix~\Alph{section}}
  \section{Triangle-equivalences associative to Gorenstein projective dimension for complexes}\label{Gdc}

In this appendix, $A$ is assumed to be an arbitrary ring.  Denote by $\K^*(\p)$ the $*$-bounded homotopy category of $\p$, where $*\in\{+,\ -, \ b\}$.
For any subcategory $\X\subseteq\A$, denote by $\langle \X\rangle$ the smallest triangulated subcategory of $\D^b(\A)$ containing $\X$. For example $\langle \GP\rangle$ is the smallest triangulated subcategory of $\D^b(\A)$.

As is known, the comparison of $\K^b(A{\text-}\Proj)$ with $\D^b(A{\text-}\Mod)$ reflecs certain homological singularity of the ring $A$ in sense that $\K^b(A{\text-}\Proj)=\D^b(A{\text-}\Mod)$ if and only if every $A$-module has finite projective dimension. Besides, the Verdier quotient $\D_{sg}(A{\text-}\Mod):=\D^b(A{\text-}\Mod)/\K^b(A{\text-}\Proj)$ was studied by Buchweitz \cite{Bu} and Orlov \cite{O} under the name of ``singularity category''.
Denote by $\underline{\GP}$ the stable category of Gorenstein projective $A$-modules. Buchweitz's Theorem (\cite[Theorem 4.4.1]{Bu}) says that there is a fully faithful triangle functor $F:\underline{\GP}\to\D_{sg}(\A)$, and $F$ is a triangle-equivalence provided that every $A$-module has finite Gorenstein projective dimension. Following this, Bergh, J{\o}rgensen and Oppermann (\cite{BJO}) considered the Verdier quotient $\D_{def}(\A):=\D_{sg}(\A)/\Im F$, and they called it the {\it Gorenstein defect category} of $\A$.
Nowadays, singularity category and related topic has been studied by many authors, see for example \cite{C1,C2,CZ,KZ,LL,L,R1,ZP}.


An object $X^\bullet\in\D^b(\A)$ is said to have {\it finite Gorenstein projective dimension} if $X^\bullet$ is isomorphic to some bounded complex consisting of Gorenstein projective modules. Denote by $\D^b(\A)_{fgp}$
the subcategory of $\D^b(\A)$ consisting of complexes with finite Gorenstein projective dimension. This definition coincides with that of \cite{Vel} and \cite{ZH}.

\begin{rem} Let $M\in\A$. $M$ has finite Gorenstein projective dimension as an $A$-module if and only if $M$ has finite Gorenstein projective dimension as a stalk complex concentrated in degree zero.
\end{rem}

Clearly, $\K^b(\p)\subseteq\D^b(\A)_{fgp}\subseteq\D^b(\A)$. We wonder how $\D^b(\A)_{fgp}$ behaves in $\D^b(\A)$.

\begin{lem}\label{lem:A.2} Let $X^\bullet\in\D^b(\A)$. The following are equivalent
\begin{enumerate}
\item[(1)] $X^\bullet\in\D^b(\A)_{fgp}$.
\item[(2)]For any quasi-isomorphism $P^\bullet\to X^\bullet$ with $P^\bullet\in\K^{-,b}({\p})$, one has $Z^{i}(P^\bullet)\in\GP$ for $i\ll0$, where $\K^{-,b}(\p)$ the full subcategory of $\K^{-}(\p)$ consisting of complexes with finite nonzero homology.
\item[(3)] There exists a quasi-isomorphism $P^\bullet\to X^\bullet$ with $P^\bullet\in\K^{-,b}({\p})$ such that $Z^{i}(P^\bullet)\in\GP$ for $i\ll0$.
\end{enumerate}
\end{lem}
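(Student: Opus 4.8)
The plan is to prove the three conditions equivalent by the cyclic chain $(1)\Rightarrow(2)\Rightarrow(3)\Rightarrow(1)$. The implication $(2)\Rightarrow(3)$ is immediate, since every object of $\D^b(\A)$ admits a quasi-isomorphism $P^\bullet\to X^\bullet$ with $P^\bullet\in\K^{-,b}(\p)$ (an ordinary projective resolution, which has bounded homology because $X^\bullet$ does); applying the universally quantified statement $(2)$ to any such $P^\bullet$ gives the existential statement $(3)$. Throughout I use the standard closure properties of $\GP$: it contains $\p$, is closed under finite direct sums, direct summands and extensions (the last by \cite[Theorem 2.5]{Ho}), and, being resolving, every syzygy of a Gorenstein projective module is again Gorenstein projective.

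For $(3)\Rightarrow(1)$ I would use a soft truncation. Given $P^\bullet\to X^\bullet$ as in $(3)$, choose $s$ small enough that $H^i(P^\bullet)=0$ for all $i\le s$ and $Z^s(P^\bullet)\in\GP$ (possible since $P^\bullet$ has only finitely many nonzero cohomologies and its low cycles are Gorenstein projective). As $P^\bullet$ is exact in degrees $\le s$, the map $d^{s-1}$ corestricts to a surjection $P^{s-1}\twoheadrightarrow Z^s(P^\bullet)$, and I form the bounded complex
$$G^\bullet=\bigl(\cdots\to0\to Z^s(P^\bullet)\xrightarrow{\iota}P^s\xrightarrow{d^s}P^{s+1}\to\cdots\to P^n\to0\bigr),$$
with $\iota$ the inclusion of the cycle, $Z^s(P^\bullet)$ sitting in degree $s-1$. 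The chain map $P^\bullet\to G^\bullet$ that is the identity in degrees $\ge s$ and is the corestricted $d^{s-1}$ in degree $s-1$ is a quasi-isomorphism, because both complexes have the same cohomology and the map induces an isomorphism on it. Since $Z^s(P^\bullet)\in\GP$ and each $P^i\in\p\subseteq\GP$, we have $G^\bullet\in\C^b(\GP)$, so $X^\bullet\cong P^\bullet\cong G^\bullet$ in $\D^b(\A)$ has finite Gorenstein projective dimension, which is $(1)$.

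The real work is $(1)\Rightarrow(2)$, which I expect to be the main obstacle, since it must control the cycles of an \emph{arbitrary} projective resolution rather than a convenient one. I would split it into two steps. First, construct \emph{one} good resolution: writing $X^\bullet\cong G^\bullet$ with $G^\bullet\in\C^b(\GP)$, I induct on the length of $G^\bullet$. When $G^\bullet$ is a stalk $G\in\GP$, every high syzygy in an ordinary projective resolution of $G$ is Gorenstein projective. For the inductive step, the brutal truncation yields a short exact sequence of complexes $0\to\sigma_{\ge a+1}G^\bullet\to G^\bullet\to G^a[-a]\to0$ with strictly shorter outer terms; building a projective resolution of $G^\bullet$ compatible with it (a horseshoe/Cartan--Eilenberg construction) produces $\widetilde P^\bullet\in\K^{-,b}(\p)$ whose cycles fit in short exact sequences $0\to Z^i(\text{sub})\to Z^i(\widetilde P^\bullet)\to Z^i(\text{quot})\to0$, so for $i\ll0$ the two ends are Gorenstein projective and, by closure under extensions, $Z^i(\widetilde P^\bullet)\in\GP$. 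Second, transfer this to an arbitrary $P^\bullet$: any two objects of $\K^{-,b}(\p)$ resolving isomorphic objects of $\D^b(\A)$ are homotopy equivalent (a quasi-isomorphism between bounded-above complexes of projectives is a homotopy equivalence), and by the generalized Schanuel lemma their cycles below the homology range differ only by projective summands, that is $Z^i(P^\bullet)\oplus Q\cong Z^i(\widetilde P^\bullet)\oplus Q'$ with $Q,Q'$ projective. Since $\GP$ contains the projectives and is closed under finite direct sums and direct summands, $Z^i(\widetilde P^\bullet)\in\GP$ forces $Z^i(P^\bullet)\in\GP$ for $i\ll0$, which is exactly $(2)$. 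The delicate points to verify are that the horseshoe construction keeps the resolution inside $\K^{-,b}(\p)$ (boundedness of homology) and that the Schanuel-type comparison of cycles is applied below the homology range; both are standard but should be stated carefully.
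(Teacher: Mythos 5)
Your proposal is correct, and your proofs of $(2)\Rightarrow(3)$ and $(3)\Rightarrow(1)$ (soft truncation of $P^\bullet$ at a low Gorenstein projective cycle) are essentially identical to the paper's. Where you genuinely diverge is the main implication $(1)\Rightarrow(2)$. The paper does this in one stroke: given any quasi-isomorphism $P^\bullet\to X^\bullet$ with $P^\bullet\in\K^{-,b}(\p)$ and writing $X^\bullet\cong G^\bullet$ with $G^\bullet\in\C^b(\GP)$, the isomorphism $P^\bullet\cong G^\bullet$ in $\D^b(\A)$ lifts to an honest quasi-isomorphism $f:P^\bullet\to G^\bullet$ because $P^\bullet$ is a bounded-above complex of projectives (\cite[1.4.P]{AFHd}); then $\Con(f)$ is an acyclic bounded-above complex whose terms $P^{i+1}\oplus G^{i}$ all lie in $\GP$, so a downward induction using the projectively resolving property of $\GP$ (\cite[Theorem 2.5]{Ho}) shows every cycle of $\Con(f)$ is Gorenstein projective, and $Z^{i}(P^\bullet)\cong Z^{i-1}(\Con(f))$ for $i\ll0$ since $G^\bullet$ vanishes in low degrees. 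This single mapping-cone argument subsumes both halves of your plan: your step 1 (horseshoe induction on the length of $G^\bullet$ plus closure of $\GP$ under extensions) and your step 2 (homotopy equivalence of resolutions plus a Schanuel-type comparison plus closure under direct summands) are replaced by comparing the \emph{arbitrary} resolution $P^\bullet$ directly with $G^\bullet$ itself, rather than with a second, specially constructed resolution. Your route does work: the horseshoe lemma for complexes goes through (with surjectivity of $Z^i(\widetilde P^\bullet)\to Z^i(\mathrm{quot})$ for $i\ll0$ coming from vanishing of $H^{i+1}$ of the sub-resolution), and the Schanuel-type statement is true in the strong form you assert --- one can prove it by splitting the mapping cylinder, since a degreewise split extension whose quotient is a contractible complex of projectives splits as complexes, giving $P^\bullet\oplus\Con(f)\cong\widetilde P^\bullet\oplus\Con(\id_{P^\bullet})$ with projective cycles on the contractible summands. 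But these are precisely the two ``delicate points'' you flag as needing care, and they carry real overhead; moreover your step 2 still relies on the same lifting fact the paper cites, so the extra machinery buys no reduction in prerequisites, only a longer (if perhaps more classically familiar) path to the same conclusion.
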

\begin{proof} (1)$\Longrightarrow$(2). Let $P^\bullet\to X^\bullet$ be a quasi-isomorphism with $P^\bullet\in\K^{-,b}({\p})$. Since $X^\bullet\in\D^b(\A)_{fgp}$,  $X^\bullet\cong G^\bullet$ in $\D(\A)$ for some $G^\bullet\in\C^b(\GP)$. Then $P^\bullet\cong G^\bullet$ in $\D^b(\A)$ and then there is a quasi-isomorphism $f:P^\bullet\to G^\bullet$ by \cite[1.4.P]{AFHd}. Hence $\Con(f)=\cdots\to P^{-n-1}\to P^{-n}\to P^{-n+1}\oplus G^{-n}\to\cdots$
is acyclic. Note that $\Con(f)$ is bounded above with each degree lies in $\GP$. It follows from \cite[Theorem 2.5]{Ho} that $Z^{i}(P^\bullet)\cong Z^{i-1}(\Con(f))\in\GP$ for $i\ll0$.

(2)$\Longrightarrow$(3) is trivial.

(3)$\Longrightarrow$(1). Let $P^\bullet\to X^\bullet$ be a quasi-isomorphism with $P^\bullet\in\K^{-,b}({\p})$ and $Z^{i}(P^\bullet)\in\GP$ for $i\ll0$. Since $P^\bullet\in\K^{-,b}({\p})$, $P^\bullet$ is isomorphic to the following complex in $\D^b(\A)$
$$G^\bullet:=0\to Z^{t}(P^\bullet)\to P^{t}\to\cdots\to P^{s-1}\to P^s\to0,$$
where $s$ is the supremum of index $i\in\mathbb{Z}$ such that $P^i\neq0$ and $t$ is the index such that $H^i(P^\bullet)=0$ for any $i\leq t$. Hence $X^\bullet\cong G^\bullet$ in $\D^b(\A)$ with $G^\bullet\in\C^b(\GP)$ and then $X^\bullet\in\D^b(\A)_{fgp}$.
\end{proof}

For each $X^\bullet\in\C(\A)$. The {\it length} $l(X^\bullet)$ of $X^\bullet$ is defined to be the cardinal of the set $\{X^i\neq0|i\in\mathbb{Z}\}$. Let $n\in\mathbb{Z}$, denote by $X^\bullet_{\geqslant n}$ the complex with the $i$th component equal to $X^i$ whenever $i\geqslant n$ and to 0 elsewhere.

\begin{thm}\label{thm:A.3} $\D^b(\A)_{fgp}$ is a thick subcategory of $\D^b(\A)$. Furthermore, $\D^b(\A)_{fgp}=\langle \GP\rangle$.
\end{thm}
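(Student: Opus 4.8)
The plan is to first show that $\D^b(\A)_{fgp}$ is a thick subcategory, and then prove the two inclusions that yield $\D^b(\A)_{fgp}=\langle\GP\rangle$. The workhorse throughout will be Lemma \ref{lem:A.2}: every $X^\bullet\in\D^b(\A)$ receives a quasi-isomorphism $P^\bullet\to X^\bullet$ from some $P^\bullet\in\K^{-,b}(\p)$, and membership $X^\bullet\in\D^b(\A)_{fgp}$ is detected by the condition $Z^{i}(P^\bullet)\in\GP$ for $i\ll0$. I also use freely that $\GP$ is closed under extensions and under direct summands (\cite[Theorem 2.5]{Ho}).

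Closure under shifts and isomorphisms is immediate, so the heart of the triangulated-subcategory claim is closure under cones. For a morphism $f\colon X^\bullet\to Y^\bullet$ in $\D^b(\A)$ with $X^\bullet,Y^\bullet\in\D^b(\A)_{fgp}$, I would fix resolutions $P^\bullet\to X^\bullet$ and $Q^\bullet\to Y^\bullet$ in $\K^{-,b}(\p)$, represent $f$ by a genuine chain map $\tilde f\colon P^\bullet\to Q^\bullet$ (using $\D^-(\A)\simeq\K^-(\p)$), and note that $\Con(\tilde f)\in\K^{-,b}(\p)$ is a resolution of $\Con(f)$. The degreewise split sequence $0\to Q^\bullet\to\Con(\tilde f)\to P^\bullet[1]\to0$ restricts, in the far-left degrees where all three complexes are acyclic, to short exact sequences of cycles $0\to Z^{i}(Q^\bullet)\to Z^{i}(\Con(\tilde f))\to Z^{i+1}(P^\bullet)\to0$; for $i\ll0$ the two ends are Gorenstein projective, so the middle term is as well, and Lemma \ref{lem:A.2} gives $\Con(f)\in\D^b(\A)_{fgp}$.

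Thickness then needs closure under summands: if $X^\bullet\oplus Z^\bullet\in\D^b(\A)_{fgp}$, I resolve both summands, so that $P^\bullet\oplus Q^\bullet$ resolves the sum and $Z^{i}(P^\bullet)\oplus Z^{i}(Q^\bullet)\in\GP$ for $i\ll0$; since $\GP$ is summand-closed, $Z^{i}(P^\bullet)\in\GP$ for $i\ll0$, whence $X^\bullet\in\D^b(\A)_{fgp}$. With thickness established, the inclusion $\langle\GP\rangle\subseteq\D^b(\A)_{fgp}$ is immediate, because each Gorenstein projective module is a stalk complex lying in $\D^b(\A)_{fgp}$ and $\langle\GP\rangle$ is by definition the smallest triangulated subcategory containing $\GP$.

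For the reverse inclusion $\D^b(\A)_{fgp}\subseteq\langle\GP\rangle$, take $X^\bullet\cong G^\bullet$ with $G^\bullet\in\C^b(\GP)$ and induct on the length $l(G^\bullet)$. Letting $t$ be the least nonzero degree, brutal truncation gives a degreewise split sequence $0\to G^\bullet_{\geqslant t+1}\to G^\bullet\to G^{t}[-t]\to0$, hence a triangle $G^\bullet_{\geqslant t+1}\to G^\bullet\to G^{t}[-t]\to G^\bullet_{\geqslant t+1}[1]$ in $\D^b(\A)$; the stalk $G^{t}[-t]$ is a shift of an object of $\GP$ and $G^\bullet_{\geqslant t+1}\in\langle\GP\rangle$ by induction, so $G^\bullet\in\langle\GP\rangle$. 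I expect the cone step to be the main obstacle: one must confirm that $\Con(\tilde f)$ has bounded homology (via the homology long exact sequence of the triangle) so that it genuinely lies in $\K^{-,b}(\p)$, and must justify the surjection $Z^{i}(\Con(\tilde f))\to Z^{i+1}(P^\bullet)$ in the acyclic range, which relies on acyclicity of $P^\bullet$ there. The remaining steps are a routine assembly of Lemma \ref{lem:A.2} with the extension- and summand-closure of $\GP$.
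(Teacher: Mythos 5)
Your proposal is correct, and for most of the statement it runs along the same lines as the paper: the closure under direct summands (resolve each summand, use that $Z^{i}(P_1^\bullet\oplus P_2^\bullet)\cong Z^{i}(P_1^\bullet)\oplus Z^{i}(P_2^\bullet)$ and that $\GP$ is summand-closed), the inclusion $\langle\GP\rangle\subseteq\D^b(\A)_{fgp}$, and the induction on length via brutal truncation for the reverse inclusion are exactly the paper's arguments. Where you genuinely differ is the triangulated-subcategory part: the paper does not prove closure under cones at all, but simply cites \cite[Proposition 3.2]{ZH}, whereas you give a direct argument -- lift $f$ to a chain map $\tilde f\colon P^\bullet\to Q^\bullet$ of resolutions in $\K^{-,b}(\p)$, observe $\Con(\tilde f)\in\K^{-,b}(\p)$ resolves $\Con(f)$, and in the stable range obtain short exact sequences $0\to Z^{i}(Q^\bullet)\to Z^{i}(\Con(\tilde f))\to Z^{i+1}(P^\bullet)\to0$ whose middle term is Gorenstein projective by extension-closure of $\GP$ (\cite[Theorem 2.5]{Ho}), then invoke Lemma \ref{lem:A.2}. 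This buys a self-contained proof at the cost of some length, and the argument is sound; it is in fact the kind of argument hidden inside the cited reference. One small correction to your own commentary: the surjectivity of $Z^{i}(\Con(\tilde f))\to Z^{i+1}(P^\bullet)$ is governed by the vanishing of $H^{i+1}$ of the \emph{subcomplex} $Q^\bullet$ (the obstruction to lifting a cycle lands in $H^{i+1}(Q^\bullet)$), not by acyclicity of $P^\bullet$; since you restrict to degrees where all three complexes are acyclic, this does not affect the proof, but the attribution should be fixed.
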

\begin{proof}  It follows from \cite[Proposition 3.2]{ZH} that $\D^b(\A)_{fgp}$ is a triangulated subcategory of $\D^b(\A)$. To get the first assertion, it suffices to show $\D^b(\A)_{fgp}$ is closed under direct summands. In fact, let $X_1^\bullet\oplus X_2^\bullet\in\D^b(\A)_{fgp}$ with $X_1^\bullet, X_2^\bullet\in\D^b(\A)$. Choose quasi-isomorphisms $P_1^\bullet\to X_1^\bullet$ and $P_2^\bullet\to X_2^\bullet$ with $P_1^\bullet, P_2^\bullet\in\K^{-,b}({\p})$. It follows that $P_1^\bullet\oplus P_2^\bullet\to X_1^\bullet\oplus X_2^\bullet$ is a quasi-isomorphism. Notice that $X_1^\bullet\oplus X_2^\bullet\in\D^b(\A)_{fgp}$ and $P_1^\bullet\oplus P_2^\bullet\in\K^{-,b}({\p})$, it follows from Lemma \ref{lem:A.2} that $Z^{i}(P_1^\bullet\oplus P_2^\bullet)\in\GP$ for $i\ll0$. Since $Z^{i}(P_1^\bullet\oplus P_2^\bullet)\cong Z^{i}(P_1^\bullet)\oplus Z^{i}(P_2^\bullet)$, we get $Z^{i}(P_1^\bullet), Z^{i}(P_2^\bullet)\in\GP$ for $i\ll0$. Then by Lemma \ref{lem:A.2} $X_1^\bullet, X_2^\bullet\in\D^b(\A)_{fgp}$ and hence $\D^b(\A)_{fgp}$ is closed under direct summands.

Note that every Gorenstein projective module viewed as a stalk complex has finite Gorenstein projective dimension. Thus $\GP\subseteq\D^b(\A)_{fgp}$
and then $\langle \GP\rangle\subseteq\D^b(\A)_{fgp}$. On the other hand, let $X^\bullet\in\D^b(\A)_{fgp}$. Then $X^\bullet\cong G^\bullet$ in $\D^b(\A)$ for some $G^\bullet\in\C^b(\GP)$. We wil show $G^\bullet\in\langle \GP\rangle$ to complete the proof. We proceed by induction on the length $l(G^\bullet)$ of $G^\bullet$. If $l(G^\bullet)=1$, it is trivial to verify $G^\bullet\in\langle \GP\rangle$. Now let $l(G^\bullet)=n\geq2$. We may suppose $G^m\neq0$ and $G^i=0$ for $i<m$. The we have the following triangle in $\D^b(\A)$:
$$G^m[-m-1]\to G^\bullet_{\geq{m+1}}\to G^\bullet\to G^m[-m].$$
By the induction hypothesis, we have that both $G^m[-m-1]$ and $G^\bullet_{\geq{m+1}}$ lie in $\langle \GP\rangle$. Therefore $G^\bullet\in\langle \GP\rangle$.
\end{proof}

\begin{prop}\label{prop:A.4} Let $X^\bullet\in\D^b(\A)$. If each $X^i$ is of finite Gorenstein projective dimension as an $A$-module, then $X^\bullet\in\D^b(\A)_{fgp}$. Furthermore, $\D^b(\A)_{fgp}=\D^b(\A)$ if and only if every $A$-module has finite Gorenstein projective dimension.
\end{prop}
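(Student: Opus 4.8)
The plan is to deduce both assertions from two facts already available in the excerpt: that $\D^b(\A)_{fgp}$ is a triangulated (indeed thick) subcategory of $\D^b(\A)$ (Theorem \ref{thm:A.3}), and the Remark preceding Lemma \ref{lem:A.2}, which identifies, for a single module $M$, finite Gorenstein projective dimension with membership of its stalk complex in $\D^b(\A)_{fgp}$. Being triangulated, $\D^b(\A)_{fgp}$ is closed under shifts and under the third vertex of any distinguished triangle, and these are the only closure properties I will need.

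First I would prove the first assertion by induction on the length $l(X^\bullet)$, mirroring the induction already used in the proof of Theorem \ref{thm:A.3}. If $l(X^\bullet)\leqslant 1$, then $X^\bullet$ is zero or a stalk complex $X^i[-i]$ with $X^i$ of finite Gorenstein projective dimension; by the Remark and closure under shifts, $X^\bullet\in\D^b(\A)_{fgp}$. For the inductive step, let $m$ be the least index with $X^m\neq0$ and pass to the brutal truncation, which gives a short exact sequence of complexes
$$0\to X^\bullet_{\geqslant m+1}\to X^\bullet\to X^m[-m]\to0$$
and hence a distinguished triangle
$$X^\bullet_{\geqslant m+1}\to X^\bullet\to X^m[-m]\to X^\bullet_{\geqslant m+1}[1]$$
in $\D^b(\A)$. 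Every component of $X^\bullet_{\geqslant m+1}$ has finite Gorenstein projective dimension and $l(X^\bullet_{\geqslant m+1})=l(X^\bullet)-1$, so $X^\bullet_{\geqslant m+1}\in\D^b(\A)_{fgp}$ by the induction hypothesis, while $X^m[-m]\in\D^b(\A)_{fgp}$ by the Remark. Since two of the three vertices lie in the triangulated subcategory $\D^b(\A)_{fgp}$, so does $X^\bullet$, completing the induction.

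For the ``furthermore'' part, I would argue both implications directly from the first assertion and the Remark. Assuming every $A$-module has finite Gorenstein projective dimension, take any $X^\bullet\in\D^b(\A)$; it is isomorphic in $\D^b(\A)$ to a bounded complex of $A$-modules, each of whose components has finite Gorenstein projective dimension by hypothesis, so the first assertion yields $X^\bullet\in\D^b(\A)_{fgp}$, whence $\D^b(\A)_{fgp}=\D^b(\A)$. Conversely, if $\D^b(\A)_{fgp}=\D^b(\A)$, then for any $A$-module $M$ its stalk complex lies in $\D^b(\A)_{fgp}$, and the Remark gives that $M$ has finite Gorenstein projective dimension.

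The whole argument is essentially formal once Theorem \ref{thm:A.3} is in hand, so I do not expect a genuine obstacle. The only point requiring care is the bookkeeping around the truncation triangle: one must check that brutal truncation strictly decreases the length while preserving ``all components of finite Gorenstein projective dimension,'' so that the induction hypothesis genuinely applies to $X^\bullet_{\geqslant m+1}$, and that the base case correctly absorbs stalks in arbitrary degree via closure under shifts. Everything substantive—the triangulatedness and thickness of $\D^b(\A)_{fgp}$—has already been established, so the remaining steps are routine.
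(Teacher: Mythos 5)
Your proof is correct and follows essentially the same route as the paper's: induction on the length $l(X^\bullet)$ using the brutal-truncation triangle $X^\bullet_{\geqslant m+1}\to X^\bullet\to X^m[-m]\to X^\bullet_{\geqslant m+1}[1]$ together with the triangulatedness of $\D^b(\A)_{fgp}$ from Theorem \ref{thm:A.3}. The only difference is that you spell out the base case (via the Remark and closure under shifts) and the ``furthermore'' equivalence explicitly, both of which the paper treats as immediate.
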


\begin{proof} We will proceed by induction on the length $l(X^\bullet)$ of $X^\bullet$. If $l(X^\bullet)=1$, it is trivial to verify $X^\bullet\in\D^b(\A)_{fgp}$. Now let $l(X^\bullet)=n\geq2$. We may suppose $X^m\neq0$ and $X^i=0$ for $i<m$. The we have the following triangle in $\D^b(\A)$:
$$X^m[-m-1]\to X^\bullet_{\geq{m+1}}\to X^\bullet\to G^m[-m].$$
By the induction hypothesis, we have that both $X^m[-m-1]$ and $X^\bullet_{\geq{m+1}}$ lie in $\D^b(\A)_{fgp}$. Therefore $X^\bullet\in\D^b(\A)_{fgp}$.
\end{proof}


\begin{thm}\label{thm:A.5} There exist two triangle-equivalences $\underline{\GP}\simeq\D^b(\A)_{fgp}/\K^b(\p)$ and  $\D_{def}(\A)\simeq\D^b(\A)/\D^b(\A)_{fgp}$.
\end{thm}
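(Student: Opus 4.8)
The plan is to deduce both equivalences from Buchweitz's theorem together with the structural description of $\D^b(\A)_{fgp}$ obtained in Theorem \ref{thm:A.3}, and from the standard behaviour of iterated Verdier quotients.

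First I would treat the equivalence $\underline{\GP}\simeq\D^b(\A)_{fgp}/\K^b(\p)$. Recall from \cite[Theorem 4.4.1]{Bu} that the comparison functor $F:\underline{\GP}\to\D_{sg}(\A)=\D^b(\A)/\K^b(\p)$, sending a Gorenstein projective module $G$ to the image of its stalk complex, is a fully faithful triangle functor; hence $\underline{\GP}$ is triangle-equivalent to the essential image $\Im F$, which is a triangulated subcategory of $\D_{sg}(\A)$. Since $\K^b(\p)$ is a thick subcategory of $\D^b(\A)_{fgp}$, which in turn is thick in $\D^b(\A)$ by Theorem \ref{thm:A.3}, Verdier's lemma on nested quotients supplies a fully faithful embedding $\D^b(\A)_{fgp}/\K^b(\p)\hookrightarrow\D_{sg}(\A)$, and I will identify its essential image with $\Im F$. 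Writing $Q:\D^b(\A)\to\D_{sg}(\A)$ for the quotient functor, the image of $\D^b(\A)_{fgp}/\K^b(\p)$ in $\D_{sg}(\A)$ is $Q(\D^b(\A)_{fgp})$; since $\D^b(\A)_{fgp}=\langle\GP\rangle$ by Theorem \ref{thm:A.3} and $Q$ is a triangle functor, this equals the triangulated subcategory $\langle Q(\GP)\rangle$ generated by the images of the Gorenstein projective stalk complexes. As $\Im F$ is a triangulated subcategory containing each $Q(G)=F(G)$, the inclusion $\langle Q(\GP)\rangle\subseteq\Im F$ is immediate; conversely every object of $\Im F$ is isomorphic to some $Q(G)$ with $G\in\GP\subseteq\D^b(\A)_{fgp}$, giving $\Im F\subseteq Q(\D^b(\A)_{fgp})$. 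Thus the two essential images coincide and $\underline{\GP}\simeq\D^b(\A)_{fgp}/\K^b(\p)$.

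For the second equivalence I would invoke the third isomorphism theorem for Verdier quotients: for the thick subcategories $\K^b(\p)\subseteq\D^b(\A)_{fgp}\subseteq\D^b(\A)$ there is a canonical triangle-equivalence $\bigl(\D^b(\A)/\K^b(\p)\bigr)\big/\bigl(\D^b(\A)_{fgp}/\K^b(\p)\bigr)\simeq\D^b(\A)/\D^b(\A)_{fgp}$. By definition $\D_{def}(\A)=\D_{sg}(\A)/\Im F$, and by the first paragraph $\Im F=\D^b(\A)_{fgp}/\K^b(\p)$ as a subcategory of $\D_{sg}(\A)=\D^b(\A)/\K^b(\p)$; substituting this into the left-hand side yields $\D_{def}(\A)\simeq\D^b(\A)/\D^b(\A)_{fgp}$.

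The main obstacle I anticipate is the careful identification $\Im F=\D^b(\A)_{fgp}/\K^b(\p)$ inside $\D_{sg}(\A)$: one must check that the natural functor from the subquotient into $\D_{sg}(\A)$ is genuinely fully faithful, so that matching essential images suffices to produce an equivalence, and that $F$ agrees on objects with the restriction of $Q$ to Gorenstein projective stalk complexes. Both points rest on the standard but slightly technical fact that $\mathscr{S}/\mathscr{U}\to\T/\mathscr{U}$ is fully faithful for nested thick subcategories $\mathscr{U}\subseteq\mathscr{S}\subseteq\T$, which is precisely where the thickness of $\D^b(\A)_{fgp}$ established in Theorem \ref{thm:A.3} becomes indispensable.
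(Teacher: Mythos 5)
Your argument is correct, but it is a genuinely different route from the paper's. The paper disposes of Theorem \ref{thm:A.5} in one line, by citing \cite[Theorem 3.4]{ZH} for the equivalence $\underline{\GP}\simeq\D^b(\A)_{fgp}/\K^b(\p)$ and \cite[Lemma 6.1]{KZ} for $\D_{def}(\A)\simeq\D^b(\A)/\D^b(\A)_{fgp}$; no argument is given beyond the references. You instead reconstruct both equivalences from ingredients already in the paper: Buchweitz's fully faithful functor $F:\underline{\GP}\to\D_{sg}(\A)$ from \cite[Theorem 4.4.1]{Bu}, the identification $\D^b(\A)_{fgp}=\langle\GP\rangle$ and its thickness (Theorem \ref{thm:A.3}), Verdier's lemma that $\mathscr{S}/\mathscr{U}\to\T/\mathscr{U}$ is fully faithful for nested (strictly full) triangulated subcategories, and the third isomorphism theorem for Verdier quotients. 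Your identification $\Im F=\langle Q(\GP)\rangle=Q(\D^b(\A)_{fgp})$ inside $\D_{sg}(\A)$ is sound (the preimage argument gives $Q(\langle\GP\rangle)\subseteq\langle Q(\GP)\rangle$, and the fully faithful embedding of the subquotient makes its essential image triangulated, giving the reverse inclusion), and once this is in place the second equivalence does follow formally, exactly as you say, since $\D_{def}(\A)$ is by definition $\D_{sg}(\A)/\Im F$. What your approach buys is a self-contained proof that also makes transparent the logical dependence of the second equivalence on the first; what the paper's approach buys is brevity, outsourcing precisely the technical points you flag (that $F$ agrees with $Q$ on stalk complexes, and the fully faithfulness of subquotient embeddings) to \cite{ZH} and \cite{KZ}. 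One minor remark: thickness of $\D^b(\A)_{fgp}$ is not actually what makes the nested-quotient embedding fully faithful --- strict fullness of the triangulated subcategories suffices, which the paper's standing conventions guarantee; thickness matters elsewhere (e.g.\ so that objects vanishing in $\D^b(\A)/\D^b(\A)_{fgp}$ are exactly those of $\D^b(\A)_{fgp}$), so your appeal to Theorem \ref{thm:A.3} at that point is harmless but slightly misattributed.
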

\begin{proof}  
%
The first equivalence follows from \cite[Theorem 3.4]{ZH} and the second one follows from \cite[Lemma 6.1]{KZ}.
\end{proof}

Consequently, we get the following.

\begin{cor}\label{cor:A.6} The following are equivalent:
\begin{enumerate}
\item[(1)] $F:\underline{\GP}\to\D_{sg}(\A)$ is a triangle-equivalence;
\item[(2)] $\D_{def}(\A)=0$;
\item[(3)] Every $A$-module has finite Gorenstein projective dimension.
\end{enumerate}
\end{cor}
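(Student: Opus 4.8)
The plan is to prove the three conditions equivalent through the cyclic chain of implications $(3)\Rightarrow(1)\Rightarrow(2)\Rightarrow(3)$, in each case deducing the step from a result already assembled in this appendix. Since all of the heavy lifting has been packaged into Buchweitz's Theorem, Theorem \ref{thm:A.3}, Proposition \ref{prop:A.4}, and Theorem \ref{thm:A.5}, the argument should reduce to short formal deductions.

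First, the implication $(3)\Rightarrow(1)$ is precisely the content of Buchweitz's Theorem as recalled above: the fully faithful triangle functor $F:\underline{\GP}\to\D_{sg}(\A)$ becomes a triangle-equivalence as soon as every $A$-module has finite Gorenstein projective dimension, so nothing further is needed. For $(1)\Rightarrow(2)$, I would argue directly from the defining presentation $\D_{def}(\A)=\D_{sg}(\A)/\Im F$. If $F$ is a triangle-equivalence, then in particular it is essentially surjective, so its essential image $\Im F$ is all of $\D_{sg}(\A)$; consequently $\D_{def}(\A)$ is the Verdier quotient of a triangulated category by itself, which is the zero category.

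The remaining implication $(2)\Rightarrow(3)$ is where the structural results of the appendix are used. By Theorem \ref{thm:A.5} there is a triangle-equivalence $\D_{def}(\A)\simeq\D^b(\A)/\D^b(\A)_{fgp}$, so the hypothesis $\D_{def}(\A)=0$ forces $\D^b(\A)/\D^b(\A)_{fgp}=0$. Since $\D^b(\A)_{fgp}$ is a thick subcategory of $\D^b(\A)$ by Theorem \ref{thm:A.3}, I would invoke the standard fact that a Verdier quotient of a triangulated category by a thick subcategory vanishes precisely when the subcategory is the whole category; this yields $\D^b(\A)_{fgp}=\D^b(\A)$. Finally, Proposition \ref{prop:A.4} translates this equality into the statement that every $A$-module has finite Gorenstein projective dimension, which is exactly $(3)$, closing the cycle.

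I do not expect a serious obstacle, as each step is a one-line deduction from a previously established statement. The only point requiring a moment's care is the standard categorical fact that a Verdier quotient by a thick subcategory is zero if and only if that subcategory equals the ambient category; this is what converts the vanishing of $\D_{def}(\A)$ into the equality $\D^b(\A)_{fgp}=\D^b(\A)$, and it is precisely the input that makes Proposition \ref{prop:A.4} applicable.
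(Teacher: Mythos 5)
Your proof is correct, but it is organized differently from the paper's. The paper dismisses $(1)\Leftrightarrow(2)$ as trivial (it only needs that $F$ is fully faithful, so that $F$ is an equivalence exactly when $\Im F=\D_{sg}(\A)$, i.e.\ when the quotient $\D_{def}(\A)=\D_{sg}(\A)/\Im F$ vanishes), and then proves \emph{both} implications $(2)\Rightarrow(3)$ and $(3)\Rightarrow(2)$ using only its own appendix machinery, namely Theorem \ref{thm:A.5} combined with Proposition \ref{prop:A.4}. You instead close a cycle $(3)\Rightarrow(1)\Rightarrow(2)\Rightarrow(3)$, importing the equivalence direction of Buchweitz's theorem as the step $(3)\Rightarrow(1)$. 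Your $(2)\Rightarrow(3)$ coincides with the paper's, and your $(1)\Rightarrow(2)$ is the paper's ``trivial'' half. The trade-off: your route is shorter at the $(3)$-to-$(1)$ step but depends on the full strength of Buchweitz's theorem as an external input, whereas the paper's route uses only the fully faithful part of that theorem and, as a byproduct, actually \emph{re-derives} its equivalence statement internally (via $(3)\Rightarrow(2)\Rightarrow(1)$), which is arguably the point of the corollary. One thing you do better: you make explicit the fact, left implicit in the paper, that a Verdier quotient by a \emph{thick} subcategory vanishes iff the subcategory is the whole category, which is exactly where Theorem \ref{thm:A.3} enters and what licenses the passage from $\D^b(\A)/\D^b(\A)_{fgp}=0$ to $\D^b(\A)_{fgp}=\D^b(\A)$.
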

\begin{proof} (1)$\Leftrightarrow$(2) is trivial.

 (2)$\Rightarrow(3)$. Let $\D_{def}(\A)=0$. It follows from Theorem \ref{thm:A.5} that $\D^b(\A)=\D^b(\A)_{fgp}$, and hence every $A$-module has finite Gorenstein projective dimension by Proposition \ref{prop:A.4}.

 (3)$\Rightarrow$(2). Assume every $A$-module has finite Gorenstein projective dimension. It follows from Proposition \ref{prop:A.4} that $\D^b(\A)_{fgp}=\D^b(\A)$, and then $\D_{def}(\A)=0$ by Theorem \ref{thm:A.5}.
\end{proof}

\bigskip {\bf Acknowledgements}
\bigskip

This research was partially supported by NSFC (Grant No. 11501257, 11626179, 11671069, 11701455, 11771212), Qing Lan
Project of Jiangsu Province, Jiangsu Government Scholarship for Overseas Studies (JS-2019-328), Shaanxi Province Basic Research Program of Natural Science (Grant No. 2017JQ1012), Natural Science Foundation of Zhejiang
Provincial (LY18A010032) and Fundamental Research Funds for the Central Universities (Grant No. JB160703). The authors are grateful to the referee for the valuable comments.

\end{document}